\documentclass[12pt]{amsart}
\usepackage{amssymb}

\usepackage[pagebackref=false,colorlinks,linkcolor=blue,citecolor=blue]{hyperref}

\usepackage{mathrsfs,graphicx,amsthm, mathtools}

\usepackage[all]{xy}
\usepackage{tikz}
\usepackage{amsmath}
\input xy
\xyoption{all}

\textheight 22cm 
\textwidth 16cm \hoffset -1.5cm

\newtheorem{lemma}{Lemma}[section]
\newtheorem{corollary}[lemma]{Corollary}
\newtheorem{theorem}[lemma]{Theorem}
\newtheorem{proposition}[lemma]{Proposition}

\theoremstyle{definition}
\newtheorem{remark}[lemma]{Remark}
\newtheorem{definition}[lemma]{Definition}

\DeclareMathOperator{\Mod}{Mod}
\DeclareMathOperator{\modd}{mod}
\DeclareMathOperator{\Endd}{End}

\DeclareMathOperator{\Hom}{Hom}

\DeclareMathOperator{\supp}{supp}

\DeclareMathOperator{\ind}{ind}
\DeclareMathOperator{\rad}{rad}

\newtheorem*{theorem a*}{Theorem A}
\newtheorem*{corollary a*}{Corollary A}

\begin{document}

\title{Relations for Grothendieck groups of $n$-cluster tilting subcategories}

\author{Raziyeh Diyanatnezhad}
\address{Department of Pure Mathematics\\
Faculty of Mathematics and Statistics\\
University of Isfahan\\
P.O. Box: 81746-73441, Isfahan, Iran}
\email{r.diyanat@sci.ui.ac.ir}

\author{Alireza Nasr-Isfahani}
\address{Department of Pure Mathematics\\
Faculty of Mathematics and Statistics\\
University of Isfahan\\
P.O. Box: 81746-73441, Isfahan, Iran\\ and School of Mathematics, Institute for Research in Fundamental Sciences (IPM), P.O. Box: 19395-5746, Tehran, Iran}
\email{nasr$_{-}$a@sci.ui.ac.ir / nasr@ipm.ir}

\subjclass[2010]{{16G10}, {16E20}, {18F30}, {18A25}}

\keywords{Grothendieck group, $n$-cluster tilting subcategory, $n$-homological pair, functor category}

\begin{abstract}
Let $\Lambda$ be an artin algebra and $\mathcal{M}$ be an n-cluster tilting subcategory of $\modd\Lambda$. We show that $\mathcal{M}$ has an additive generator if and only if the n-almost split sequences form a basis for the relations for the Grothendieck group of $\mathcal{M}$ if and only if every effaceable functor $\mathcal{M}\rightarrow Ab$ has finite length. As a consequence we show that if $\modd\Lambda$ has n-cluster tilting subcategory of finite type then the n-almost split sequences form a basis for the relations for the Grothendieck group of $\Lambda$.
\end{abstract}

\maketitle

\section{Introduction}
Let $\Lambda$ be an artin algebra and $\modd\Lambda$ be the abelian category of all finitely generated left $\Lambda$-modules. The free abelian group $\mathrm{K}_0(\modd\Lambda,0)$ with the basis the set of isomorphism classes $[M]$ of $\Lambda$-modules $M$ modulo the subgroup generated by elements of the form $[M\oplus N]-[M]-[N]$ is called the split Grothendieck group of $\modd\Lambda$. $\mathrm{K}_0(\modd\Lambda,0)$ modulo the subgroup generated by $[L]-[M]+[N]$ for every exact sequence $0\to L\to M\to N\to0$ is called the Grothendieck group $\mathrm{K}_0(\modd\Lambda)$ of $\modd\Lambda$. The isomorphism classes of indecomposable modules form a basis for the generators of $\mathrm{K}_0(\modd\Lambda)$. It is natural to ask about the existence of a basis for the relations of $\mathrm{K}_0(\modd\Lambda)$. Butler in \cite{Bu} proved that if $\Lambda$ is of finite type (i.e. there are finitely many indecomposable finitely generated left $\Lambda$-modules up to isomorphism) then the almost split sequences generate the relations of $\mathrm{K}_0(\modd\Lambda)$. Auslander in \cite{Au3} proved that the generators given by Butler are linearly independent and also he proved the converse of the Butler's theorem. Several authors studied the same question for special subcategories of $\modd\Lambda$ and for a more general class of categories other than $\modd\Lambda$. Let $R$ be a commutative Cohen-Macaulay complete ring. Auslander and Reiten in \cite{AR} and Hiramatsu in \cite{H} studied this question for the subcategory of Cohen-Macaulay $R$-modules of $\modd R$. Enomoto studied this question in the context of Quillen's exact categories \cite{Eno}. Let $\mathcal{C}$ be an essentially small abelian Krull-Schmidt category. An additive contravariant functor $F$ from $\mathcal{C}$ to the abelian category Ab of all abelian groups is called effaceable if it has a presentation $\Hom_\mathcal{C}(-,X)\xrightarrow {(-,f)}\Hom_\mathcal{C}(-,Y)\rightarrow F\rightarrow 0$ such that $f$ is an epimorphism. Krause and Vossieck in \cite{Kra} proved that every effaceable functor $\mathcal{C}\rightarrow Ab$ has  finite length if and only if the almost split sequences in $\mathcal{C}$ generate the relations of the Grothendieck group of $\mathcal{C}$. In this paper, we study the relations of the Grothendieck group of an $n$-cluster tilting subcategory of $\modd\Lambda$.

Let $n$ be a fixed positive integer. Iyama in \cite{I1, I2} introduced $n$-cluster tilting subcategories of abelian categories in order to construct a higher Auslander correspondence, which is a higher dimensional version of the Auslander correspondence between algebras of finite representation type and Auslander algebras (for more details see \cite{I3}). A higher dimensional analogue of the Auslander-Reiten theory was developed in \cite{I1, I2, I4}. Higher Auslander-Reiten theory has several connections to other areas, for example non-commutative algebraic geometry, combinatorics, categorification of cluster algebras, higher category theory and symplectic geometry. Let $\Lambda$ be an artin algebra and $\mathcal{M}$ be an n-cluster tilting subcategory of $\modd\Lambda$. A pair $(\Lambda, \mathcal{M})$ is called n-homological pair \cite{HJV}. An $n$-homological pair $(\Lambda,\mathcal{M})$ is called of finite type if the number of isomorphism classes of indecomposable objects in $\mathcal{M}$ is finite \cite{EN}. The question of finiteness of $n$-homological pairs for $n\geq 2$, which is among the first that have been asked by Iyama \cite{I3}, is still open. Up to now, all known $n$-homological pairs with $n\geq 2$ are of finite type.

Let $(\Lambda, \mathcal{M})$ be an n-homological pair. All short exact sequences in $\mathcal{M}$ are split, but there are nice exact sequences with $n+2$ terms which is called n-exact sequence (see section \ref{nex}). An special class of n-exact sequences which are called n-almost split sequences (see Definition \ref{def2.3}) plays an important role in higher Auslander-Reiten theory. Reid in \cite[Definition 1.3]{Re} defined the Grothendieck group of $\mathcal{M}$ as follows. Let $\mathrm{K}_0(\mathcal{M},0)$ be a free abelian group with the basis the set of isomorphism classes $[X]$ of $\Lambda$-modules $X\in \mathcal{M}$ modulo the subgroup generated by elements of the form $[X\oplus Y]-[X]-[Y]$. For $X\in \mathcal{M}$ we denote by $[X]_0$ the element in $\mathrm{K}_0(\mathcal{M},0)$ corresponding to $X$. $\mathrm{K}_0(\mathcal{M},0)$ modulo the subgroup generated by $\sum^{n+1}_{i=0}(-1)^i[X_i]_0$ for every n-exact sequence $0\to X_{n+1}\to\cdots \to X_{0}\to 0$ in $\mathcal{M}$ is called the Grothendieck group $\mathrm{K}_0(\mathcal{M})$ of $\mathcal{M}$. In this paper, we show that the Iyama's question has a positive answer if and only if the n-almost split sequences in $\mathcal{M}$ form a basis for the relations of $\mathrm{K}_0(\mathcal{M})$. More precisely, we prove the following theorem.

\begin{theorem a*}$($Theorem \ref{main3}$)$\label{A}
Let $(\Lambda,\mathcal{M})$ be an $n$-homological pair. Then the following statements are equivalent:
\begin{itemize}
\item[(1)] The elements $\displaystyle\sum_{i=0}^{n+1}(-1)^i[A_i]_0$, for all $n$-almost split sequences $0\to A_{n+1}\to\cdots\to A_0\to0$ in $\mathcal{M}$ form a basis for the relations of $\mathrm{K}_0(\mathcal{M})$.
\item[(2)] Every effaceable functor $F:\mathcal{M}\rightarrow\mathrm{Ab}$ has finite length.
\item[(3)] $(\Lambda,\mathcal{M})$ is of finite type.
\end{itemize}
\end{theorem a*}

Note that in the classical case, Krause and Vossieck's proof of \cite[Theorem 4.10]{Kra} is close to the Butler's original proof of \cite[Theorem]{Bu} which is depended heavily on functorial arguments, but Auslander in the proof of Theorem 2.3 of \cite{Au3} used a bilinear form on $\mathrm{K}_0(\modd\Lambda,0)$ which has been used implicity by Benson and Parker in \cite{BP}. In higher dimensional case we need to use both techniques in the proof of Theorem A.

Almost all artin algebras $\Lambda$ are representation infinite (i.e. there are infinitely many indecomposable finitely generated left $\Lambda$-modules up to isomorphism). Let $\Lambda$ be a representation infinite artin algebra. It is natural to ask if there is a basis for the relations of $\mathrm{K}_0(\modd\Lambda)$. In the following corollary, we give a partial answer to this question.

\begin{corollary a*}$($Corollary \ref{cor}$)$\label{AA}
Let $\Lambda$ be an artin algebra. If $\modd\Lambda$ has an n-cluster tilting subcategory $\mathcal{M}$ of finite type then the elements $\displaystyle\sum_{i=0}^{n+1}(-1)^i[A_i]_0$, for all $n$-almost split sequences $0\to A_{n+1}\to\cdots\to A_0\to0$ in $\mathcal{M}$ form a basis for the relations of $\mathrm{K}_0(\modd\Lambda)$.
\end{corollary a*}

The paper is organized as follows. In section 2, we recall some definitions and known results that will
be needed in the rest of the paper. In section 3, first we show that for an n-homological pair $(\Lambda,\mathcal{M})$ if every effaceable functor $F:\mathcal{M}\rightarrow\mathrm{Ab}$ has finite length then $\mathrm{K}_0(\mathcal{M})\cong\mathrm{K}_0(\modd\Lambda)$ and finally we prove Theorem A and Corollary A.

\subsection{Notation}
Throughout this paper, we fix a positive integer $n$. We assume that $R$ is a commutative artinian ring and $\Lambda$ is an artin $R$-algebra. We denote by $\modd\Lambda$ the abelian category of all finitely generated left $\Lambda$-modules. For $M\in \modd\Lambda$, we denote by $\rad(M)$ the Jacobson radical of $M$. It is known that there exists only a finite number non-isomorphic simple $R$-modules $S_1,\ldots,S_l$. Let $J$ be the injective envelope of $\coprod_{i=1}^lS_i$. We denote by $D$ the duality $\Hom_R(-,J)$. Also we denote by $\mathcal{J}_\Lambda$ the Jacobson radical of $\modd\Lambda$, where for each $X,Y\in\modd\Lambda$
$$\mathcal{J}_\Lambda(X,Y)=\{h:X\rightarrow Y \mid 1_X-gh\;\text{is invertible for any}\;g:Y \rightarrow X \}.$$
We denote by $\mathrm{proj}(\mathcal{A})$  the full subcategory of an abelian category $\mathcal{A}$ consisting of finitely generated projective objects. Also we denote by $\ind(\mathcal{C})$ the full subcategory of an additive category $\mathcal{C}$ consisting of finitely generated indecomposable objects.

\section{Preliminaries}
In this section, we collect some fundamental facts about $n$-abelian categories, $n$-cluster tilting subcategories and Grothendieck groups which will be used throughout the paper. For further details the readers are referred to \cite{I1, I3, J, Re}.
\subsection{$n$-Abelian categories}\label{nex}
Let $\mathcal{M}$ be an additive category and $d_{n+1}:X_{n+1} \rightarrow X_n$ be a morphism in $\mathcal{M}$. An $n$-cokernel of $d_{n+1}$ is a sequence
\begin{equation}
(d_n, \ldots, d_1): X_n \overset{d_n}{\longrightarrow} X_{n-1} \xrightarrow {d_{n-1}}\cdots \overset{d_{2}}{\longrightarrow} X_1 \overset{d_1}{\longrightarrow} X_{0} \notag
\end{equation}
in $\mathcal{M}$ such that for all $Y\in \mathcal{M}$
the induced sequence of abelian groups
\begin{align}
0 \rightarrow \Hom_{\mathcal{M}}(X_0,Y) \rightarrow \Hom_{\mathcal{M}}(X_1,Y) \rightarrow\cdots\rightarrow \Hom_{\mathcal{M}}(X_n,Y) \rightarrow \Hom_{\mathcal{M}}(X_{n+1},Y) \notag
\end{align}
is exact \cite[Definition 2.2]{J}. Dually, $n$-kernel of a morphism in $\mathcal{M}$ is defined. Also, a complex
\begin{equation}
X_{n+1} \xrightarrow {d_{n+1}} X_n \overset{d_n}{\longrightarrow} X_{n-1} \xrightarrow {d_{n-1}}\cdots \overset{d_{2}}{\longrightarrow} X_1 \overset{d_1}{\longrightarrow} X_{0} \notag
\end{equation}
in $\mathcal{M}$ is called an $n$-exact sequence if $(d_n, \ldots, d_1)$ is an $n$-cokernel of $d_{n+1}$ and $(d_{n+1}, \ldots, d_2)$ is an $n$-kernel of $d_1$ \cite[Definition 2.4]{J}.

\begin{definition}$($\cite[Definition 3.1]{J}$)$
An additive category $\mathcal{M}$ is called $n$-abelian if it satisfies the following axioms:
\begin{itemize}
\item[(A0)]
$\mathcal{M}$ is idempotent complete.
\item[(A1)]
$\mathcal{M}$ is closed under n-kernels and n-cokernels.
\item[(A2)]
For every monomorphism $d_{n+1}:X_{n+1} \rightarrow X_n$ in $\mathcal{M}$ and for every $n$-cokernel $(d_n, \ldots, d_1)$ of $d_{n+1}$, the  following sequence is $n$-exact:
\begin{equation}
X_{n+1} \xrightarrow {d_{n+1}} X_n \overset{d_n}{\longrightarrow} X_{n-1} \xrightarrow {d_{n-1}}\cdots \overset{d_{2}}{\longrightarrow} X_1 \overset{d_1}{\longrightarrow} X_{0}. \notag
\end{equation}
\item[(A3)]
For every epimorphism $d_{1}:X_{1} \rightarrow X_0$ in $\mathcal{M}$ and for every $n$-kernel $(d_{n+1}, \ldots, d_2)$ of $d_1$, the  following sequence is $n$-exact:
\begin{equation}
X_{n+1} \xrightarrow {d_{n+1}} X_n \overset{d_n}{\longrightarrow} X_{n-1} \xrightarrow {d_{n-1}}\cdots \overset{d_{2}}{\longrightarrow} X_1 \overset{d_1}{\longrightarrow} X_{0}. \notag
\end{equation}
\end{itemize}
\end{definition}

A subcategory $\mathcal{M}$ of $\modd\Lambda$ is called contravariantly finite if for every $A\in \modd\Lambda$ there exist an object $M\in \mathcal{M}$ and a morphism $f:M \rightarrow A$ such that for each $N\in \mathcal{M}$ the sequence of abelian groups
$$\Hom_{\Lambda}(N, M) \rightarrow \Hom_{\Lambda}(N, A)\rightarrow 0$$
is exact. Such a morphism $f$ is called a right $\mathcal{M}$-approximation of $A$. The notion of covariantly finite subcategory and left $\mathcal{M}$-approximation is defined dually. A functorially finite subcategory of $\modd\Lambda$ is a subcategory which is both covariantly and contravariantly finite in $\modd\Lambda$ \cite{AS}.

\begin{definition}$($\cite[Definition 2.2]{I1}$)$
A full subcategory $\mathcal{M}$ of $\modd\Lambda$ is called $n$-cluster tilting if it is functorially finite and
\begin{align*}
\mathcal{M}&=\{X\in\text{$\modd\Lambda$}\,|\,\mathrm{Ext}^i_\Lambda(X,\mathcal{M})=0 \,\,\, \text{for } 0<i<n\}\\
&=\{X\in\text{$\modd\Lambda$}\,|\,\mathrm{Ext}^i_\Lambda(\mathcal{M},X)=0 \,\,\, \text{for } 0<i<n\}.
\end{align*}
\end{definition}
It is clear that $\mathrm{proj}(\modd\Lambda)\subseteq \mathcal{M}$ and for all $X\in\modd\Lambda$, every right $\mathcal{M}$-approximation of $X$ is an epimorphism. Also, according to \cite[Theorem 3.16]{J}, any $n$-cluster tilting subcategory $\mathcal{M}$ of $\modd\Lambda$ is an $n$-abelian category.

 \begin{definition}$($\cite[Definition 2.5]{HJV} and \cite[Definition 2.13]{EN}$)$
Let $\Lambda$ be an artin algebra and $\mathcal{M}$ be an $n$-cluster tilting subcategory of $\modd\Lambda$. A pair $(\Lambda,\mathcal{M})$ is called an $n$-homological pair. An $n$-homological pair $(\Lambda,\mathcal{M})$ is called of finite type if $\mathcal{M}$ has an additive generator or equivalently the number of isomorphism classes of indecomposable objects in $\mathcal{M}$ is finite.
\end{definition}
We recall the following useful result.
 \begin{theorem}$($\cite[Theorem 2.2.3]{I1}$)$\label{reso}
Assume that $\mathcal{M}$ is an $n$-cluster tilting subcategory of $\modd\Lambda$. For any $X\in$ $\modd\Lambda$, there exists an exact sequence
\begin{equation}
0\rightarrow t_{n-1}\rightarrow\cdots\rightarrow t_0 \rightarrow X\rightarrow0, \notag
\end{equation}
with $t_i\in \mathcal{M}$ such that the following sequence is exact on $\mathcal{M}$
 \begin{equation}
0\to\Hom_\Lambda(-,t_{n-1})\to\cdots\to\Hom_\Lambda(-,t_0)\to \Hom_\Lambda(-,X)\to 0.\notag
\end{equation}
\end{theorem}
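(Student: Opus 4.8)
The plan is to produce the required exact sequence by iterating right $\mathcal{M}$-approximations and then to prove, by a dimension-shifting argument, that the $(n-1)$-st $\mathcal{M}$-syzygy of $X$ already lies in $\mathcal{M}$. (The case $n=1$ is immediate, since then $\mathcal{M}=\modd\Lambda$ and one may take $t_0=X$, so assume $n\ge 2$.)

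First I would build the $\mathcal{M}$-syzygies. As $\mathcal{M}$ is contravariantly finite, choose a right $\mathcal{M}$-approximation $f_0\colon t_0\to X$, which is an epimorphism because $\mathrm{proj}(\modd\Lambda)\subseteq\mathcal{M}$. Set $X_1=\ker f_0$ and iterate: given $X_j$, pick a right $\mathcal{M}$-approximation $f_j\colon t_j\to X_j$ (again an epimorphism) and put $X_{j+1}=\ker f_j$, with $X_0:=X$. Composing $t_j\to X_j\hookrightarrow t_{j-1}$ gives a complex $\cdots\to t_1\to t_0\to X\to 0$ which is exact at $X$ and at each $t_j$, since the $f_j$ are epimorphisms and the $X_j\hookrightarrow t_{j-1}$ are kernel inclusions. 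Moreover, for every $M\in\mathcal{M}$, left-exactness of $\Hom_\Lambda(M,-)$ together with surjectivity of $\Hom_\Lambda(M,t_j)\to\Hom_\Lambda(M,X_j)$ (the defining property of a right approximation) shows that each short exact sequence $0\to X_{j+1}\to t_j\to X_j\to 0$ stays short exact after applying $\Hom_\Lambda(M,-)$; splicing these for $j=0,\dots,n-2$ yields exactness of $0\to\Hom_\Lambda(M,X_{n-1})\to\Hom_\Lambda(M,t_{n-2})\to\cdots\to\Hom_\Lambda(M,t_0)\to\Hom_\Lambda(M,X)\to 0$ for every $M\in\mathcal{M}$.

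The crux --- and the step I expect to be the main obstacle --- is to show that $X_{n-1}\in\mathcal{M}$; granting this, one simply puts $t_{n-1}:=X_{n-1}$. Since each $t_j$ lies in $\mathcal{M}$, we have $\Ext^i_\Lambda(M,t_j)=0$ for all $M\in\mathcal{M}$ and $0<i<n$. From the long exact $\Ext_\Lambda(M,-)$-sequence of $0\to X_j\to t_{j-1}\to X_{j-1}\to 0$, together with surjectivity of $\Hom_\Lambda(M,t_{j-1})\to\Hom_\Lambda(M,X_{j-1})$, one first obtains $\Ext^1_\Lambda(M,X_j)=0$ for $1\le j\le n-1$ (this is where $n\ge2$ enters). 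I would then prove by induction on $k$, for $1\le k\le n-1$, that $\Ext^i_\Lambda(M,X_k)=0$ for all $M\in\mathcal{M}$ and all $i$ with $1\le i\le k$: in the induction step, the long exact sequence of $0\to X_k\to t_{k-1}\to X_{k-1}\to 0$ traps, for $2\le i\le k$, the group $\Ext^i_\Lambda(M,X_k)$ between $\Ext^{i-1}_\Lambda(M,X_{k-1})=0$ (inductive hypothesis) and $\Ext^i_\Lambda(M,t_{k-1})=0$ (as $t_{k-1}\in\mathcal{M}$ and $1\le i\le k\le n-1$), while the case $i=1$ is the vanishing already noted. Taking $k=n-1$ gives $\Ext^i_\Lambda(M,X_{n-1})=0$ for all $M\in\mathcal{M}$ and $0<i<n$, hence $X_{n-1}\in\mathcal{M}$ by the defining equality $\mathcal{M}=\{Y\in\modd\Lambda\mid\Ext^i_\Lambda(\mathcal{M},Y)=0 \text{ for all } 0<i<n\}$.

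With $t_{n-1}:=X_{n-1}$, the complex $0\to t_{n-1}\to t_{n-2}\to\cdots\to t_0\to X\to 0$ is exact and all its terms lie in $\mathcal{M}$, while the splicing computation above shows that $0\to\Hom_\Lambda(-,t_{n-1})\to\cdots\to\Hom_\Lambda(-,t_0)\to\Hom_\Lambda(-,X)\to 0$ is exact on $\mathcal{M}$; this is exactly the assertion. The only genuinely delicate point is the bookkeeping of the $\Ext$-degrees in the dimension shift, arranged so that everything collapses precisely at the $(n-1)$-st syzygy, and this is where the two-sided $\Ext$-vanishing built into the definition of an $n$-cluster tilting subcategory is essential.
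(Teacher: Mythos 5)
Your proposal is correct: the iterated right $\mathcal{M}$-approximation construction, the splicing of the $\Hom_\Lambda(M,-)$-exact short exact sequences, and the dimension-shifting induction showing $\Ext^i_\Lambda(M,X_{n-1})=0$ for $0<i<n$ (hence $X_{n-1}\in\mathcal{M}$ by the right-orthogonality characterization) are all sound. The paper itself gives no proof of this statement, citing it from Iyama, and your argument is essentially the standard one found there.
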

We recall that this sequence is called a right $\mathcal{M}$-resolution for $X$.

Motivated by the concept of almost split sequences in $\modd\Lambda$, Iyama defined the concept of $n$-almost split sequences in an $n$-cluster tilting subcategory $\mathcal{M}$ of $\modd\Lambda$ as follows.
\begin{definition}$($\cite[Definition 3.1]{I1}$)$\label{def2.3}
Let $\mathcal{M}$ be an $n$-cluster tilting subcategory of $\modd\Lambda$. An exact sequence
\begin{equation}
0\to A_{n+1}\xrightarrow {f_{n+1}}A_n\to\cdots\to A_1{\stackrel{f_1}{\longrightarrow}}A_0\to 0 \notag
\end{equation}
 with terms in $\mathcal{M}$ and $f_i\in \mathcal{J}_\Lambda$ for any $i$
 is called an $n$-almost split sequence if the sequence
 \begin{equation}
0\to\Hom_\Lambda(-,A_{n+1})\to\cdots\to\Hom_\Lambda(-,A_1)\to \mathcal{J}_\Lambda(-,A_0)\to 0\notag
\end{equation}
is exact on $\mathcal{M}$.
\end{definition}
It is obvious that every $n$-almost split sequence in $\mathcal{M}$ is an $n$-exact sequence.

The following theorem states the existence of $n$-almost split sequences in $n$-cluster tilting subcategories of $\modd\Lambda$.
\begin{theorem}$($\cite[Theorem 3.3.1]{I1}$)$\label{n-as}
Suppose that $(\Lambda,\mathcal{M})$ is an $n$-homological pair.
\begin{itemize}
\item[$(1)$]
 For any non-projective $X\in\ind(\mathcal{M})$, there exists an $n$-almost split sequence
\begin{equation}
0\to A_{n+1}\xrightarrow {f_{n+1}}A_n\to\cdots\to A_1{\stackrel{f_1}{\longrightarrow}}X\to 0 \notag
\end{equation}
in $\mathcal{M}$.
\item[$(2)$]
For any non-injective $Y\in\ind(\mathcal{M})$, there exists an $n$-almost split sequence
\begin{equation}
0\to Y\xrightarrow {f_{n+1}}A_n\to\cdots\to A_1{\stackrel{f_1}{\longrightarrow}}A_0\to 0 \notag
\end{equation}
in $\mathcal{M}$.
\end{itemize}
 \end{theorem}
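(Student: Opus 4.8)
The plan is to prove part $(1)$; part $(2)$ is entirely dual, obtained either by the analogous construction with the inverse translation $\tau_n^{-1}$ or by applying the duality $D$ and using that $D\mathcal{M}$ is an $n$-cluster tilting subcategory of $\modd\Lambda^{\mathrm{op}}$. The starting observation is that Definition \ref{def2.3} reformulates the assertion purely in the functor category: writing $S_X=\Hom_\Lambda(-,X)/\mathcal{J}_\Lambda(-,X)$, a sequence as in $(1)$ with $A_0=X$ is an $n$-almost split sequence precisely when
\[
0\to\Hom_\Lambda(-,A_{n+1})\to\cdots\to\Hom_\Lambda(-,A_1)\to\mathcal{J}_\Lambda(-,X)\to0
\]
is exact on $\mathcal{M}$, that is, when it is a resolution of $\mathcal{J}_\Lambda(-,X)$ by representable functors of length $n$ whose differentials lie in the radical. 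So the goal is to produce an $n$-exact sequence ending at $X$ whose left term is the correct homological object and whose associated functorial resolution enjoys this minimality.

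The main tool is the $n$-Auslander--Reiten translation $\tau_n:=\tau\,\Omega^{\,n-1}$, built from the ordinary translate $\tau$ and the syzygy $\Omega$ in $\modd\Lambda$. First I would establish the membership statement: for a non-projective indecomposable $X\in\mathcal{M}$ the module $\tau_n X$ again lies in $\mathcal{M}$ and is indecomposable and non-injective. This rests on the defining Ext-vanishing $\Ext^i_\Lambda(\mathcal{M},\mathcal{M})=0$ for $0<i<n$ together with the right $\mathcal{M}$-resolutions of Theorem \ref{reso}, which control the syzygies $\Omega^{\,n-1}X$. Next, and this is the crux, I would establish the $n$-dimensional Auslander--Reiten duality
\[
\Ext^n_\Lambda(Y,\tau_n X)\;\cong\;D\,\underline{\Hom}_\Lambda(X,Y)
\]
naturally in $Y\in\mathcal{M}$. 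This is proved by dimension shifting: one starts from the classical formula $\Ext^1_\Lambda(Y,\tau Z)\cong D\,\underline{\Hom}_\Lambda(Z,Y)$ and transports it through the $n-1$ syzygy steps, the passage being legitimate exactly because of the Ext-vanishing that defines $\mathcal{M}$.

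With the duality in hand the sequence is produced by the higher analogue of Auslander and Reiten's socle argument. Since $X$ is indecomposable and non-projective, $\underline{\Endd}_\Lambda(X)$ is a local ring, so $\Ext^n_\Lambda(X,\tau_n X)\cong D\,\underline{\Endd}_\Lambda(X)$ has simple socle as an $\underline{\Endd}_\Lambda(X)$-module; a generator of this socle singles out a distinguished class $\xi\in\Ext^n_\Lambda(X,\tau_n X)$. Realizing $\xi$ by an $n$-exact sequence with terms in $\mathcal{M}$ — which is possible because $\mathcal{M}$ is $n$-abelian, hence closed under the relevant $n$-kernels and $n$-cokernels — yields $0\to\tau_n X\to A_n\to\cdots\to A_1\to X\to0$. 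Finally I would verify that this is $n$-almost split in the sense of Definition \ref{def2.3}: the duality identifies the image of $\Hom_\Lambda(-,A_1)\to\Hom_\Lambda(-,X)$ with $\mathcal{J}_\Lambda(-,X)$ — equivalently, every non-retraction $W\to X$ with $W\in\mathcal{M}$ factors through $f_1$ — precisely because $\xi$ generates the socle and is therefore annihilated by the radical of $\underline{\Endd}_\Lambda(X)$, while minimality forces each $f_i$ to lie in $\mathcal{J}_\Lambda$.

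The main obstacle is the combination of the membership step and the duality: both require the full force of the $n$-cluster tilting hypothesis and careful bookkeeping of the syzygy operations. Once these are in place, the socle argument and the verification of the exactness condition of Definition \ref{def2.3} are formal, and part $(2)$ follows by the dual construction.
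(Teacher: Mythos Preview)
The paper does not prove this theorem; it is quoted from Iyama \cite[Theorem 3.3.1]{I1} as a background result and used without argument. Your outline is a faithful sketch of Iyama's original proof --- the $n$-Auslander--Reiten translate $\tau_n=\tau\Omega^{n-1}$, the higher duality $\Ext^n_\Lambda(-,\tau_nX)\cong D\,\underline{\Hom}_\Lambda(X,-)$ on $\mathcal{M}$, and the socle argument --- so nothing further is required here.
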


\begin{definition}$($\cite[Definition 3.1]{JK}$)$
Let $(\Lambda,\mathcal{M})$ be an $n$-homological pair and
 $$\delta:\quad X_{n+1} \xrightarrow {d_{n+1}} X_n \overset{d_n}{\longrightarrow}\cdots \overset{d_{2}}{\longrightarrow} X_1 \overset{d_1}{\longrightarrow} X_{0}$$
 be an $n$-exact sequence in $\mathcal{M}$. The contravariant defect of $\delta$, denoted by $\delta^\ast$, is defined by the exact sequence of functors
 $$\Hom_\Lambda(-,X_1)\xrightarrow {\Hom_\Lambda(-, d_1)} \Hom_\Lambda(-,X_0) \rightarrow \delta^\ast\rightarrow0.$$
\end{definition}
Note that in case $n=1$, the contravariant defect of short exact sequences was introduced in \cite[Section $IV.4$]{ARS}.
\begin{remark}\label{def-sim}
By Definition \ref{def2.3}, for an $n$-almost split sequence
\begin{equation}
\delta:\quad 0\to A_{n+1}\xrightarrow {f_{n+1}}A_n\to\cdots\to A_1{\stackrel{f_1}{\longrightarrow}}A_0\to 0 \notag
\end{equation}
in $n$-cluster tilting subcategory $\mathcal{M}$ of $\modd\Lambda$, there exists the following exact sequence
\begin{align*}
0 \rightarrow \Hom_\Lambda(-,A_{n+1}) \rightarrow\cdots\Hom_\Lambda(-,A_1)\rightarrow \Hom_\Lambda(-,A_0) \rightarrow\Hom_\Lambda(-,A_0)/\mathcal{J}_\Lambda(-,A_0)\rightarrow0,
\end{align*}
on $\mathcal{M}$.
So $\delta^*=\Hom_\Lambda(-,A_0)/\mathcal{J}_\Lambda(-,A_0)$.
When $A_0$ is an indecomposable module, for any $X\in\ind(\mathcal{M})$,

$$\delta^*(X)=\left\{\begin{array}{lll}
0  \quad X\ncong A_0,\\
\\
\Endd_\Lambda(A_{0})/\mathcal{J}_\Lambda(A_0,A_{0})  \quad X\cong A_{0}.
\end{array}\right.$$
\end{remark}
\subsection{ Grothendieck groups }
Let $\mathcal{C}$ be an essentially small additive category and $G(\mathcal{C})$ be the free abelian group with basis the isomorphism classes of all objects $C$ in $\mathcal{C}$. Then the split Grothendieck group of $\mathcal{C}$ is defined as
\begin{equation}
\mathrm{K}_0(\mathcal{C},0)\coloneqq G(\mathcal{C})/\langle[A\oplus B]-[A]-[B]\ | \ A,B\in \mathcal{C}\rangle. \notag
\end{equation}
For any $X\in\mathcal{C}$, we denote by $[X]_0$ the element in $\mathrm{K}_0(\mathcal{C},0)$ corresponding to $X$.
Now, we assume that $\mathcal{C}$ is an abelian category. The Grothendieck group of $\mathcal{C}$ is a quotient group of $\mathrm{K}_0(\mathcal{C},0)$. Indeed, by considering the subgroup
$$\langle[X]_0-[Y]_0+[Z]_0\,|\,0\to X\to Y\to Z\to0\text{ is a short exact sequence in\,\,} \mathcal{C}\rangle,$$
the Grothendieck group of  $\mathcal{C}$ is defined as
\begin{equation}
\mathrm{K}_0(\mathcal{C})\coloneqq\mathrm{K}_0(\mathcal{C},0)/\langle[X]_0-[Y]_0+[Z]_0\,|\,0\to X\to Y\to Z\to0\text{ is a short exact sequence in\,\,} \mathcal{C}\rangle.\notag
\end{equation}
Recently, the Grothendieck group of $n$-abelian categories is defined as follows.
\begin{definition}$($\cite[Definition 1.3]{Re}$)$
Let $\mathcal{C}$ be an $n$-abelian category. Then the Grothendieck group of $\mathcal{C}$ is defined as
\begin{equation}
\mathrm{K}_0(\mathcal{C})\coloneqq\mathrm{K}_0(\mathcal{C},0)/\langle\sum^{n+1}_{i=0}(-1)^i[X_i]_0\,|\,0\to X_{n+1}\to\cdots \to X_{0}\to 0 \text{\ is an $n$-exact sequence in $\mathcal{C}$}\rangle. \notag
\end{equation}
\end{definition}
If $\mathcal{C}$ is an abelian or $n$-abelian category, then for any $X\in\mathcal{C}$ we denote by $[X]$ the element in $\mathrm{K}_0(\mathcal{C})$ corresponding to $X$.

Let $(\Lambda,\mathcal{M})$ be an $n$-homological pair. Recall that any $X\in\modd\Lambda$ has a right $\mathcal{M}$-resolution
\begin{equation}
0\rightarrow t_{n-1}^X\rightarrow\cdots\rightarrow t_0^X \rightarrow X\rightarrow0. \notag
\end{equation}
The index with respect to $\mathcal{M}$ is defined in \cite[Definition 1.4]{Re} by the following map
\begin{align*}
\mathrm{Ind}_{\mathcal{M}}:&\modd\Lambda\to\mathrm{K}_0(\mathcal{M},0).\\
& X\mapsto \sum^{n-1}_{i=0}(-1)^i[t_i^X]_0
\end{align*}
Note that $\mathrm{Ind}_{\mathcal{M}}$ is well defined (see \cite[ Remark 2.1]{Re}).

\section{Relations for Grothendieck groups}

In this section, we describe the relations of the Grothendieck group $\mathrm{K}_0(\mathcal{M})$ in case an $n$-homological pair $(\Lambda,\mathcal{M})$ is of finite type. By using $n$-almost split sequences we give a natural basis for $\mathrm{Ker}(\pi)$, where $\pi:\mathrm{K}_0(\mathcal{M},0)\rightarrow \mathrm{K}_0(\mathcal{M})$ is the natural projection.

Let $\mathcal{A}$ be an essentially small additive category. An additive contravariant functor $F$ from $\mathcal{A}$ to the abelian category Ab of all abelian groups is called $\mathcal{A}$-module. The category of all $\mathcal{A}$-modules is denoted by $\Mod(\mathcal{A})$. Morphisms in $\Mod(\mathcal{A})$ are natural transformations of contravariant functors. It is known that $\Hom_\mathcal{A}(-,X)$ is a projective object in $\Mod(\mathcal{A})$, for any $X\in\mathcal{A}$. An $\mathcal{A}$-module $F:\mathcal{A}\rightarrow \mathrm{Ab}$ is called finitely presented if it can be presented as
\begin{equation}
\Hom_\mathcal{A}(-,X)\xrightarrow {(-,f)}\Hom_\mathcal{A}(-,Y)\rightarrow F\rightarrow 0. \label{fp}
\end{equation}
The full subcategory of finitely presented $\mathcal{A}$-modules is denoted by $\mathrm{fp}(\mathcal{A})$.
For any $F\in\mathrm{fp}(\mathcal{A})$, if $f:X\rightarrow Y$ in presentation \eqref{fp} is an epimorphism, then $F$ is called effaceable. All effaceable functors form a full subcategory of $\mathrm{fp}(\mathcal{A})$ which is denoted by $\mathrm{eff}(\mathcal{A})$. Let $\mathcal{B}$ be a full subcategory of $\mathcal{A}$ and $F\in \Mod(\mathcal{A})$. We denote the restriction of $F$ to $\mathcal{B}$ by $F|_\mathcal{B}$. We refer the readers to \cite{Au, Au1} for more information about functor categories.

Let $(\Lambda,\mathcal{M})$ be an $n$-homological pair. Since $\mathcal{M}$ is closed under weak cokernels, $\mathrm{fp}(\mathcal{M})$ is an abelian category. By \cite[Lemma 8.5]{Bel}, there exists a fully faithful functor
\begin{align*}
\mathbb{H}:\modd\Lambda & \rightarrow \mathrm{fp}(\mathcal{M}),\\
 A&\mapsto \Hom_\Lambda(-,A)|_\mathcal{M}
\end{align*}
that induces an equivalence $\mathcal{M}\cong\mathrm{proj(fp(}\mathcal{M}))$.

For proving our main result we need the following proposition.

\begin{proposition}\label{f1}
Suppose that $(\Lambda,\mathcal{M})$ is an $n$-homological pair. Then the groups $\mathrm{K}_0(\mathcal{M},0)$ and $\mathrm{K}_0(\mathrm{fp}(\mathcal{M}))$ are isomorphic.
\end{proposition}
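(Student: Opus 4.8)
The plan is to build the isomorphism through the equivalence $\mathbb{H}\colon \modd\Lambda \to \mathrm{fp}(\mathcal{M})$ which restricts to an equivalence $\mathcal{M}\cong\mathrm{proj}(\mathrm{fp}(\mathcal{M}))$. Since $\mathrm{fp}(\mathcal{M})$ is an abelian category, every object has a finite projective resolution only if its projective dimension is finite; but more basically, every object $F\in\mathrm{fp}(\mathcal{M})$ has a two-term projective presentation $P_1\to P_0\to F\to 0$ by definition, so one can define a map on the level of split Grothendieck groups by sending $[F]$ to $[P_0]-[P_1]$ after transporting $P_0,P_1$ back to $\mathcal{M}$. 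So first I would recall that $\mathrm{K}_0(\mathcal{M},0)\cong\mathrm{K}_0(\mathrm{proj}(\mathrm{fp}(\mathcal{M})),0)$ via $\mathbb{H}$, since an additive equivalence clearly induces an isomorphism of split Grothendieck groups (it respects direct sums and isomorphism classes).

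The substantive part is the identification $\mathrm{K}_0(\mathrm{proj}(\mathcal{B}),0)\cong \mathrm{K}_0(\mathcal{B})$ for $\mathcal{B}=\mathrm{fp}(\mathcal{M})$, an abelian category in which every object has projective dimension at most one (every $F$ sits in $0\to P_1\to P_0\to F\to0$ with $P_i$ projective, because the presentation \eqref{fp} has kernel a subfunctor of the projective $\Hom_\mathcal{M}(-,X)$, and over $\mathcal{M}$ — which is the projective generators — such kernels are again projective; this is exactly the statement that $\mathrm{fp}(\mathcal{M})$ has global dimension $\le$ something controlled, but here I only need that the kernel of a map of projectives is projective, i.e. $\mathrm{proj}(\mathcal{B})$ is closed under kernels of epimorphisms onto projectives, equivalently every $F$ has a length-one projective resolution). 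Given this, I would define
\begin{align*}
\varphi\colon \mathrm{K}_0(\mathrm{proj}(\mathcal{B}),0)&\to \mathrm{K}_0(\mathcal{B}),\qquad [P]_0\mapsto [P],
\end{align*}
which is well defined since split exact sequences are exact, and an inverse
\begin{align*}
\psi\colon \mathrm{K}_0(\mathcal{B})&\to \mathrm{K}_0(\mathrm{proj}(\mathcal{B}),0),\qquad [F]\mapsto [P_0]_0-[P_1]_0,
\end{align*}
where $0\to P_1\to P_0\to F\to 0$ is a projective resolution. I would check $\psi$ is well defined on $\mathrm{K}_0(\mathcal{B},0)$ by the standard Schanuel-lemma argument (two projective resolutions of the same $F$ give the same class), then that it kills the relations $[F']_0-[F]_0+[F'']_0$ coming from short exact sequences $0\to F'\to F\to F''\to 0$ by using the horseshoe lemma to splice projective resolutions and computing the alternating sum; and finally that $\varphi,\psi$ are mutually inverse, which is immediate on generators since $\psi\varphi[P]_0 = [P]_0 - 0$ and $\varphi\psi[F]=[P_0]-[P_1]=[F]$.

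The main obstacle I expect is verifying that $\psi$ is well defined, specifically that it respects the defining relations of $\mathrm{K}_0(\mathcal{B})$ coming from short exact sequences: this requires the horseshoe lemma in $\mathrm{fp}(\mathcal{M})$ and the bookkeeping of the resulting long exact sequence of projectives, together with knowing that kernels of maps between projectives stay projective so that all the resolutions involved really do have length one and the alternating sums telescope correctly. A secondary but necessary point is to confirm that $\mathrm{fp}(\mathcal{M})$ genuinely has this "projective dimension $\le 1$ for resolutions" property — this follows because the syzygy of $F$ in presentation \eqref{fp} is a finitely presented subfunctor of a representable functor $\Hom_\mathcal{M}(-,X)$, and since $\mathcal{M}$ is the subcategory of projectives in $\mathrm{fp}(\mathcal{M})$ with $\mathbb{H}$ full and faithful, such a subfunctor is again representable, hence projective; I would cite \cite{Bel} or the functor-category literature \cite{Au, Au1} for this. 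Once these are in place the isomorphism $\mathrm{K}_0(\mathcal{M},0)\cong \mathrm{K}_0(\mathrm{fp}(\mathcal{M}))$ is the composite $\varphi\circ(\mathbb{H}\text{-induced iso})$.
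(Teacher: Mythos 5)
Your overall route is the same as the paper's: transport along the equivalence $\mathcal{M}\cong\mathrm{proj}(\mathrm{fp}(\mathcal{M}))$ and then identify $\mathrm{K}_0(\mathrm{proj}(\mathcal{B}),0)$ with $\mathrm{K}_0(\mathcal{B})$ by a resolution argument. But the key input you use for the second step is false. You claim that the kernel of a map between representable functors on $\mathcal{M}$ is again representable (equivalently, that every $F\in\mathrm{fp}(\mathcal{M})$ has projective dimension at most $1$), and you justify this by fullness and faithfulness of $\mathbb{H}$. Neither the statement nor the justification is correct: a finitely presented subfunctor of $\Hom_{\mathcal{M}}(-,X)$ need not be representable, and full faithfulness of $\mathbb{H}$ says nothing about subobjects. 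Concretely, for an $n$-almost split sequence $\delta:0\to A_{n+1}\to\cdots\to A_0\to 0$ the contravariant defect $\delta^{\ast}$ is a simple object of $\mathrm{fp}(\mathcal{M})$ whose minimal projective resolution is
$$0\to\Hom_\Lambda(-,A_{n+1})\to\cdots\to\Hom_\Lambda(-,A_0)\to\delta^{\ast}\to 0,$$
of length $n+1$; already for $n=1$ (ordinary almost split sequences in $\modd\Lambda$) the projective dimension is $2$, not $1$. What is true is that the kernel of $\Hom_\Lambda(-,f)|_{\mathcal{M}}$ is $\Hom_\Lambda(-,\ker f)|_{\mathcal{M}}$ with $\ker f$ generally lying outside $\mathcal{M}$; resolving $\ker f$ by a right $\mathcal{M}$-resolution of length $n$ (Theorem \ref{reso}) yields $\mathrm{gl.dim}(\mathrm{fp}(\mathcal{M}))\leq n+1$, which is exactly the bound the paper imports from Beligiannis.

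The repair is routine but changes your formulas: with finite projective resolutions $0\to P_{n+1}\to\cdots\to P_0\to F\to 0$ one must set $\psi([F])=\sum_{i}(-1)^i[P_i]_0$, and well-definedness then follows from the generalized Schanuel lemma and the horseshoe lemma much as you outline; this is precisely the resolution theorem the paper cites from Bass for abelian categories of finite global dimension. So the architecture of your proof survives, but as written it rests on a false hereditariness claim, and the two-term alternating sum $[P_0]_0-[P_1]_0$ is not the correct formula for the inverse map.
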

\begin{proof}
The equivalence $\mathcal{M}\cong\mathrm{proj(fp(}\mathcal{M}))$ induces a group isomorphism
\begin{align*}
\mathbb{H}:\mathrm{K}_0(\mathcal{M},0) & \rightarrow \mathrm{K}_0(\mathrm{proj(fp(}\mathcal{M})),0).\\
 [A]_0&\mapsto [\Hom_\Lambda(-,A)|_\mathcal{M}]_0
\end{align*}
By \cite[Theorem 8.23]{Bel}, $\mathrm{gl.dim(fp(}\mathcal{M}))\leq n+1$. Then by \cite[Theorem 4.6 of Chapter 8]{Bas}, there exists a group isomorphism
\begin{align*}
\mathbb{\eta}:\mathrm{K}_0(\mathrm{proj(fp(}\mathcal{M})),0) & \rightarrow \mathrm{K}_0(\mathrm{fp}(\mathcal{M})).\\
 [P]_0&\mapsto [P]
\end{align*}
Therefore $\sigma\coloneqq \eta \circ\mathbb{H}$ is a desired group isomorphism.
\end{proof}

The following easy lemma, which immediately follows from definitions, indicates the relationship between contravariant defect of $n$-exact sequences in $\mathcal{M}$ and effaceable functors in $\mathrm{fp}(\mathcal{M})$.
\begin{lemma}\label{lem1}
Let $(\Lambda,\mathcal{M})$ be an $n$-homological pair. Then
$$\mathrm{eff}(\mathcal{M})=\{\delta^\ast\,|\, \delta \text{\,\,is an n-exact sequence in\,\,} \mathcal{M}\}.$$
\end{lemma}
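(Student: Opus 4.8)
The plan is to prove the two inclusions separately, unwinding the definitions of effaceable functor and of contravariant defect, and using the fact that in an $n$-abelian category an epimorphism always admits an $n$-kernel extending it to an $n$-exact sequence (axiom (A3) together with (A1)).

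First I would show $\{\delta^\ast \mid \delta \text{ an } n\text{-exact sequence in } \mathcal{M}\}\subseteq\mathrm{eff}(\mathcal{M})$. Given an $n$-exact sequence $\delta\colon X_{n+1}\xrightarrow{d_{n+1}}X_n\to\cdots\to X_1\xrightarrow{d_1}X_0$, the map $d_1$ is an epimorphism (being part of an $n$-exact sequence, since $(d_{n+1},\ldots,d_2)$ is an $n$-kernel of $d_1$, the defining exactness on $\mathcal{M}$ of the induced sequence of $\Hom$-functors forces $d_1$ to be epic). By definition $\delta^\ast$ sits in the exact sequence $\Hom_\Lambda(-,X_1)\xrightarrow{(-,d_1)}\Hom_\Lambda(-,X_0)\to\delta^\ast\to 0$, which is precisely a presentation of the form \eqref{fp} with $f=d_1$ an epimorphism; hence $\delta^\ast\in\mathrm{eff}(\mathcal{M})$.

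Conversely, let $F\in\mathrm{eff}(\mathcal{M})$, so $F$ has a presentation $\Hom_\Lambda(-,X_1)\xrightarrow{(-,f)}\Hom_\Lambda(-,X_0)\to F\to 0$ with $f\colon X_1\to X_0$ an epimorphism in $\mathcal{M}$. Since $\mathcal{M}$ is $n$-abelian, by (A1) the morphism $f$ has an $n$-kernel $(d_{n+1},\ldots,d_2)$, and by (A3) the resulting complex $X_{n+1}\xrightarrow{d_{n+1}}X_n\to\cdots\to X_2\xrightarrow{d_2}X_1\xrightarrow{f}X_0$ is an $n$-exact sequence $\delta$ in $\mathcal{M}$. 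By construction the contravariant defect $\delta^\ast$ is the cokernel of $(-,f)$ in $\mathrm{fp}(\mathcal{M})$, which is exactly $F$. Thus $F=\delta^\ast$ lies in the right-hand set, completing the reverse inclusion.

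The only point requiring a little care — rather than a genuine obstacle — is matching up the "epimorphism in $\mathcal{M}$" appearing in the definition of effaceable with the categorical epimorphisms that arise as $d_1$ in $n$-exact sequences, and checking that the $n$-kernel supplied by (A1)/(A3) does not alter the cokernel of $(-,f)$; both follow formally from the Yoneda embedding and the definition of $n$-kernel. I expect no serious difficulty here, which is consistent with the lemma being described as "easy" and "immediately follows from definitions."
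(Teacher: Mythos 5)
Your proof is correct and is precisely the unwinding of definitions that the paper intends (it offers no written proof, describing the lemma as immediate from the definitions). One justification needs fixing: the epimorphy of $d_1$ comes from the requirement that $(d_n,\dots,d_1)$ be an $n$-cokernel of $d_{n+1}$, which makes $\Hom_\Lambda(X_0,Y)\to\Hom_\Lambda(X_1,Y)$ injective for every $Y\in\mathcal{M}$; the $n$-kernel condition on $(d_{n+1},\dots,d_2)$ that you cite only gives left exactness of $\Hom_\Lambda(-,X_{n+1})\to\cdots\to\Hom_\Lambda(-,X_0)$ and would not by itself force $d_1$ to be epic. The ``matching up'' of epimorphism notions that you defer is genuinely harmless: the paper's definition of effaceable is stated for an arbitrary additive category $\mathcal{A}$, so both occurrences of ``epimorphism'' are categorical epimorphisms in $\mathcal{M}$, and these in any case coincide with surjections of modules because $\mathcal{M}$ contains all injective $\Lambda$-modules.
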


Note that a direct sum of $n$-almost split sequences and a direct summand of an $n$-almost split sequence are again $n$-almost split sequences. By
\cite[Proposition 3.1.1]{I1} the study of $n$-almost split sequences is reduced to that of $n$-almost split sequences with indecomposable end terms. From now on, by $n$-almost split sequences we means $n$-almost split sequences with indecomposable left and right terms.

\begin{lemma}\label{lem2}
Let $(\Lambda,\mathcal{M})$ be an $n$-homological pair and $\pi:\mathrm{K}_0(\mathcal{M},0)\rightarrow \mathrm{K}_0(\mathcal{M})$ be the natural projection. Then $\mathrm{Ker}(\pi)$ is generated by elements $\displaystyle\sum_{i=0}^{n+1}(-1)^i[A_i]_{0}$, for all $n$-almost split sequences $0\to A_{n+1}\to\cdots\to A_0\to0$ in $\mathcal{M}$ if and only if  for every $F\in\mathrm{eff}(\mathcal{M})$,
$[F]\in\langle[\delta^\ast]\,|\,\delta \text{\,\,is an n-almost split sequence in\,\,} \mathcal{M}\rangle$.
\end{lemma}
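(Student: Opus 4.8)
The plan is to push the whole statement through the group isomorphism $\sigma=\eta\circ\mathbb{H}\colon\mathrm{K}_0(\mathcal{M},0)\to\mathrm{K}_0(\mathrm{fp}(\mathcal{M}))$ produced in Proposition~\ref{f1}, so that the two conditions in the lemma become assertions about one and the same subgroup of $\mathrm{K}_0(\mathrm{fp}(\mathcal{M}))$.

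The first step is to compute $\sigma$ on the relation elements. Let
\[
\delta\colon\quad 0\to X_{n+1}\xrightarrow{d_{n+1}}X_n\to\cdots\to X_1\xrightarrow{d_1}X_0\to0
\]
be an $n$-exact sequence in $\mathcal{M}$. Since $(d_{n+1},\dots,d_2)$ is an $n$-kernel of $d_1$, applying $\Hom_\Lambda(-,-)$ and passing to the defect produces an exact sequence in the abelian category $\mathrm{fp}(\mathcal{M})$
\[
0\to\Hom_\Lambda(-,X_{n+1})|_\mathcal{M}\to\cdots\to\Hom_\Lambda(-,X_1)|_\mathcal{M}\to\Hom_\Lambda(-,X_0)|_\mathcal{M}\to\delta^\ast\to0 .
\]
As $\mathbb{H}(X_i)=\Hom_\Lambda(-,X_i)|_\mathcal{M}$ is projective in $\mathrm{fp}(\mathcal{M})$ and $\eta$ sends the class of a projective to its class in $\mathrm{K}_0(\mathrm{fp}(\mathcal{M}))$, reading off the alternating sum of this finite projective resolution gives
\[
\sigma\!\left(\sum_{i=0}^{n+1}(-1)^i[X_i]_0\right)=\sum_{i=0}^{n+1}(-1)^i[\mathbb{H}(X_i)]=[\delta^\ast].
\]
The same computation applies to an $n$-almost split sequence $\delta$ (which is in particular $n$-exact), so $\sigma\!\left(\sum_{i=0}^{n+1}(-1)^i[A_i]_0\right)=[\delta^\ast]$ as well.

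Everything else is formal transport of subgroups. By definition $\pi$ is the quotient map modulo the subgroup generated by the elements $\sum_{i=0}^{n+1}(-1)^i[X_i]_0$ ranging over all $n$-exact sequences, so the computation above shows that $\sigma$ maps $\mathrm{Ker}(\pi)$ onto the subgroup of $\mathrm{K}_0(\mathrm{fp}(\mathcal{M}))$ generated by $\{[\delta^\ast]\mid\delta\text{ is }n\text{-exact in }\mathcal{M}\}$, which by Lemma~\ref{lem1} is exactly the subgroup $\mathcal{E}$ generated by $\{[F]\mid F\in\mathrm{eff}(\mathcal{M})\}$. Likewise $\sigma$ maps the subgroup generated by the $n$-almost split relations onto the subgroup $\mathcal{S}$ generated by $\{[\delta^\ast]\mid\delta\text{ is }n\text{-almost split}\}$. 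Hence the first condition of the lemma is equivalent to $\mathcal{E}=\mathcal{S}$. Since an $n$-almost split sequence is $n$-exact, its defect lies in $\mathrm{eff}(\mathcal{M})$ by Lemma~\ref{lem1}, so $\mathcal{S}\subseteq\mathcal{E}$ holds unconditionally; therefore $\mathcal{E}=\mathcal{S}$ is equivalent to $\mathcal{E}\subseteq\mathcal{S}$, which (as $\mathcal{E}$ is generated by the classes $[F]$) amounts to $[F]\in\mathcal{S}$ for every $F\in\mathrm{eff}(\mathcal{M})$ — precisely the second condition. This establishes both implications simultaneously.

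The only step that is not pure bookkeeping is the first one: one must check that the complex of representables attached to an $n$-exact sequence really is exact in $\mathrm{fp}(\mathcal{M})$ with cokernel $\delta^\ast$, so that the alternating-sum evaluation of $\sigma$ on a relation element is legitimate. This is where the $n$-kernel axiom enters, together with (through Proposition~\ref{f1}, whose proof uses $\mathrm{gl.dim}(\mathrm{fp}(\mathcal{M}))\le n+1$ and the resolution theorem) the fact that $\sigma$ is an isomorphism. After that the argument is a routine chase through $\sigma$.
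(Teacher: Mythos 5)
Your proof is correct and follows essentially the same route as the paper: both transport the problem through the isomorphism $\sigma$ of Proposition \ref{f1}, identify the alternating sum $\sum_{i=0}^{n+1}(-1)^i[\Hom_\Lambda(-,X_i)|_\mathcal{M}]$ with $[\delta^\ast]$ via the projective resolution of the defect in $\mathrm{fp}(\mathcal{M})$, and invoke Lemma \ref{lem1} to match $\mathrm{eff}(\mathcal{M})$ with the defects of $n$-exact sequences. The only difference is presentational: you phrase the conclusion as an equality of subgroups $\mathcal{E}=\mathcal{S}$ (making explicit the automatic inclusion $\mathcal{S}\subseteq\mathcal{E}$), whereas the paper runs an elementwise chain of equivalences.
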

\begin{proof}
By Lemma \ref{lem1}, we can identify $\mathrm{eff}(\mathcal{M})$ with $\{\delta^\ast\,|\, \delta$ is an $n$-exact sequence in $\mathcal{M}\}$.
Let $\delta_{X_0}:0\to X_{n+1}\to\cdots\to X_0\to0$ be an $n$-exact sequence in $\mathcal{M}$. By Proposition \ref{f1}, we have
\begin{eqnarray}
\displaystyle\sum_{i=0}^{n+1}(-1)^i[X_i]_0\in\langle\displaystyle\sum_{i=0}^{n+1}(-1)^i[A_i]_0\,|\,0\to A_{n+1}\to\cdots\to A_0\to0\nonumber\\
\text{\,\,is an $n$-almost split sequence in\,\,} \mathcal{M}\rangle \nonumber
\end{eqnarray}
if and only if
\begin{eqnarray}
\sigma(\displaystyle\sum_{i=0}^{n+1}(-1)^i[X_i]_0)\in\langle\sigma(\displaystyle\sum_{i=0}^{n+1}(-1)^i[A_i]_0)\,|\,0\to A_{n+1}\to\cdots\to  A_0\to0\nonumber\\
\text{\,\,is an $n$-almost split sequence in\,\,} \mathcal{M}\rangle \nonumber
\end{eqnarray}
, where $\sigma$ is the group isomorphism in the proof of the Proposition \ref{f1}, if and only if
\begin{eqnarray}
\displaystyle\sum_{i=0}^{n+1}(-1)^i[\mathrm{Hom}_\Lambda(-,X_i)]\in\langle\displaystyle\sum_{i=0}^{n+1}(-1)^i[\mathrm{Hom}_\Lambda(-,A_i)]\,|\,0\to A_{n+1}\to\cdots\to A_0\to0\nonumber\\
\text{\,\,is an $n$-almost split sequence in\,\,} \mathcal{M}\rangle \nonumber
\end{eqnarray}
if and only if
$$[\delta^\ast_{X_0}]\in\langle[\delta^\ast]\,|\,\delta \text{\,\,is an n-almost split sequence in\,\,} \mathcal{M}\rangle.$$
\end{proof}

Let $\mathcal{C}$ be a Krull-Schmidt essentially small additive category and
$F:\mathcal{C}\rightarrow\mathrm{Ab}$ be an additive functor. $F$ is called of finite length if it has a finite composition series. The support of $F$ is a set of all indecomposable objects $X\in\mathcal{C}$ that $F(X)\neq0$ and is denoted by $\supp(F)$. There exists a relationship between $\supp(F)$ and length of $F$. We recall the following lemmas which are useful in the rest of the paper.

\begin{lemma}$($\cite[Lemma 3.4]{Kra}$)$\label{kra3.4}
Let $\mathcal{C}$ be Hom-finite and $F:\mathcal{C}\rightarrow\mathrm{Ab}$ a finitely generated functor. Then the
length of $F$ is finite if and only if $\supp(F)$ is finite.
\end{lemma}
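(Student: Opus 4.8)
The plan is to prove the two implications separately, using that $\mathcal{C}$ is Krull--Schmidt and Hom-finite. Write $F$ as a quotient of a representable functor, say $\Hom_{\mathcal{C}}(-,X)\twoheadrightarrow F$, which exists since $F$ is finitely generated. First I would treat the easy direction: if $F$ has finite length, then a composition series $0=F_0\subsetneq F_1\subsetneq\cdots\subsetneq F_m=F$ has simple subquotients $F_j/F_{j-1}$, and $\supp(F)\subseteq\bigcup_j\supp(F_j/F_{j-1})$; so it suffices to check that a simple functor $S$ has finite (indeed, one-point) support. A simple object in $\Mod(\mathcal{C})$ is of the form $S_Z=\Hom_{\mathcal{C}}(-,Z)/\rad(-,Z)$ for a unique indecomposable $Z$, with $S_Z(Y)=0$ unless $Y\cong Z$; hence $\supp(S)$ is a single point and $\supp(F)$ is finite, being a finite union of singletons.

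For the converse, assume $\supp(F)$ is finite; I would argue by induction on the cardinality $|\supp(F)|$. If $\supp(F)=\varnothing$ then $F=0$ and we are done. Otherwise pick $Z\in\supp(F)$ and an element $0\neq\xi\in F(Z)$; by Yoneda this corresponds to a nonzero map $\Hom_{\mathcal{C}}(-,Z)\to F$ with image $F'\subseteq F$, a finitely generated subfunctor generated in degree $Z$. The key point, using Hom-finiteness, is that $F'$ has finite length: $F'$ is a quotient of $\Hom_{\mathcal{C}}(-,Z)$, whose value at any indecomposable $Y\in\supp(F)$ is a finite-length module over the local ring $\Endd_{\mathcal{C}}(Y)$ (finite $R$-length, since $R$ is artinian and Hom-groups are finitely generated $R$-modules), and outside the finite set $\supp(F)$ the functor $F'$ vanishes; so $\sum_{Y}\ell_R(F'(Y))$ is finite and bounds the length of $F'$. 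Then $F/F'$ is again finitely generated with $\supp(F/F')\subsetneq\supp(F)$ (we have killed $Z$ from the support, since the whole of $F(Z)$ lies in the span generated by $\xi$ under the action of $\Endd_{\mathcal{C}}(Z)$ on the local module $F(Z)$ — here one must be slightly careful and possibly iterate finitely many times within degree $Z$ to exhaust $F(Z)$, which is harmless since $F(Z)$ has finite $R$-length). By the induction hypothesis $F/F'$ has finite length, and an extension of finite-length objects has finite length, so $F$ does too.

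The main obstacle is the converse direction, and specifically the bookkeeping in the inductive step: one needs that after quotienting by the subfunctor generated by all of $F(Z)$ (not just one element), the support strictly shrinks, and one needs the finite-length bound on subquotients generated in a single degree. Both are consequences of Hom-finiteness over the artinian base ring $R$: every $\Hom_{\mathcal{C}}(Y,Z)$ is a finite-length $R$-module, so any subquotient of a representable, restricted to the finite set $\supp(F)$ and vanishing elsewhere, has finite total $R$-length, hence finite composition length in $\Mod(\mathcal{C})$. Once that finiteness is in hand, the induction and the extension-closedness of finite-length functors finish the argument.
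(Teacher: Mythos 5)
The paper does not prove this lemma---it is cited from Krause and Vossieck \cite[Lemma 3.4]{Kra} without proof---so there is no in-paper argument to compare against; your proof is correct and is essentially the standard one for that source: simple functors over a Hom-finite Krull--Schmidt category are of the form $\Hom_{\mathcal{C}}(-,Z)/\rad(-,Z)$ and have one-point support (forward direction), while in the converse direction Hom-finiteness over the artinian base bounds the composition length of a subquotient of a representable by the sum of the $R$-lengths of its values on the finitely many indecomposables in its support, since a proper inclusion of subfunctors is already detected on some indecomposable object. One small simplification: the induction on $|\supp(F)|$ is dispensable, because that counting bound applies directly to $F$ itself ($F$ is a quotient of a representable, so each $F(Y)$ has finite $R$-length, and $F$ vanishes off the finite set $\supp(F)$).
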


\begin{lemma}$($\cite[Lemma 3.6]{Kra}$)$\label{kra3.6}
Let $\mathcal{C}$ be Hom-finite. Given functors $F$ and $(F_i)_{i\in I}$ in $\mathrm{fp}(\mathcal{C})$,
$$
[F]\in\langle[F_i] \,|\, i \in I\rangle \subseteq \mathrm{K}_0(\mathrm{fp}(\mathcal{C})) \text{\qquad implies\qquad} \supp(F)\subseteq \displaystyle\bigcup_{i\in I}\supp(F_i).$$
\end{lemma}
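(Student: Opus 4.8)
The plan is to attach to every indecomposable object $X\in\mathcal{C}$ a group homomorphism $\lambda_X\colon\mathrm{K}_0(\mathrm{fp}(\mathcal{C}))\to\mathbb{Z}$ which is nonzero exactly on the classes of functors whose support contains $X$, and then to evaluate the given relation through all the $\lambda_X$. To build $\lambda_X$, first I would consider evaluation at $X$, namely $\mathrm{ev}_X\colon\mathrm{fp}(\mathcal{C})\to\Mod R$, $G\mapsto G(X)$. A short exact sequence $0\to G'\to G\to G''\to0$ in $\mathrm{fp}(\mathcal{C})$ is in particular a short exact sequence of functors, so evaluating at $X$ yields a short exact sequence of $R$-modules; thus $\mathrm{ev}_X$ is exact. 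Moreover, for $G\in\mathrm{fp}(\mathcal{C})$ a presentation $\Hom_\mathcal{C}(-,Y')\to\Hom_\mathcal{C}(-,Y)\to G\to0$ evaluated at $X$ realizes $G(X)$ as a quotient of $\Hom_\mathcal{C}(X,Y)$, a finitely generated module over the commutative artinian ring $R$ and hence of finite length. Consequently $G\mapsto\mathrm{length}_R\bigl(G(X)\bigr)$ is additive on short exact sequences of $\mathrm{fp}(\mathcal{C})$ and descends to a well-defined homomorphism $\lambda_X\colon\mathrm{K}_0(\mathrm{fp}(\mathcal{C}))\to\mathbb{Z}$, $[G]\mapsto\mathrm{length}_R\bigl(G(X)\bigr)$, with the key property that $\lambda_X([G])\neq0$ if and only if $X\in\supp(G)$.

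Next I would use the hypothesis. An element of the subgroup $\langle[F_i]\mid i\in I\rangle$ is a finite $\mathbb{Z}$-linear combination, so write $[F]=\sum_{k=1}^{m}n_k[F_{i_k}]$ in $\mathrm{K}_0(\mathrm{fp}(\mathcal{C}))$. Fix $X\in\supp(F)$ and apply $\lambda_X$: this gives $\mathrm{length}_R\bigl(F(X)\bigr)=\sum_{k=1}^{m}n_k\,\mathrm{length}_R\bigl(F_{i_k}(X)\bigr)$. The left-hand side is strictly positive since $F(X)\neq0$, so some summand on the right is nonzero, i.e.\ $F_{i_k}(X)\neq0$ for some $k$; hence $X\in\supp(F_{i_k})\subseteq\bigcup_{i\in I}\supp(F_i)$. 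As $X\in\supp(F)$ was arbitrary, $\supp(F)\subseteq\bigcup_{i\in I}\supp(F_i)$, as claimed.

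I do not expect a genuine obstacle here; the two points that need care are both consequences of Hom-finiteness of $\mathcal{C}$: that $\mathrm{ev}_X$ is exact on $\mathrm{fp}(\mathcal{C})$ (immediate from objectwise computation of kernels and cokernels in the functor category) and that $G(X)$ has finite $R$-length for every $G\in\mathrm{fp}(\mathcal{C})$, so that $\lambda_X$ is defined on the whole group $\mathrm{K}_0(\mathrm{fp}(\mathcal{C}))$ rather than merely on a subgroup. Granting these, the argument is a one-line evaluation of the relation.
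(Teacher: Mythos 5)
Your argument is correct; the paper gives no proof of this lemma (it is cited from Krause--Vossieck), and your construction --- the homomorphisms $\lambda_X\colon\mathrm{K}_0(\mathrm{fp}(\mathcal{C}))\to\mathbb{Z}$, $[G]\mapsto\mathrm{length}_R\bigl(G(X)\bigr)$, well defined because short exact sequences of finitely presented functors are objectwise exact and Hom-finiteness over the artinian ring $R$ forces $G(X)$ to have finite length --- is exactly the standard argument in the cited source. Applying $\lambda_X$ to the relation for each $X\in\supp(F)$ then gives the claim, so there is no gap.
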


Let $\mathcal{A}$ be an abelian category. A subcategory $\mathcal{C}$ of $\mathcal{A}$ is called a Serre subcategory if for
any exact sequence
\begin{equation}
0\rightarrow A_1\rightarrow A_2\rightarrow A_3\rightarrow 0, \notag
\end{equation}
we have that $A_2\in \mathcal{C}$ if and only if $A_1\in \mathcal{C}$ and $A_3\in \mathcal{C}$.

\begin{proposition}\label{main1}
Let $(\Lambda,\mathcal{M})$ be an $n$-homological pair and $\pi:\mathrm{K}_0(\mathcal{M},0)\rightarrow \mathrm{K}_0(\mathcal{M})$ be the natural projection. Then the following statements are equivalent:
\begin{itemize}
\item[(1)]
Every effaceable functor $F:\mathcal{M}\rightarrow\mathrm{Ab}$ has finite length.
\item[(2)]
 $\mathrm{Ker}(\pi)$ is generated by elements $\displaystyle\sum_{i=0}^{n+1}(-1)^i[A_i]_{0}$, for all $n$-almost split sequences $0\to A_{n+1}\to\cdots\to A_0\to0$ in $\mathcal{M}$.
\end{itemize}
\end{proposition}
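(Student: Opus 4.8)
The plan is to carry out both implications inside the functor category $\mathrm{fp}(\mathcal{M})$, using two bridges that are already available: the isomorphism $\sigma\colon\mathrm{K}_0(\mathcal{M},0)\xrightarrow{\ \sim\ }\mathrm{K}_0(\mathrm{fp}(\mathcal{M}))$ of Proposition \ref{f1}, and Lemma \ref{lem2}, which rephrases (2) as the assertion that $[F]\in\langle[\delta^\ast]\mid\delta\text{ an }n\text{-almost split sequence in }\mathcal{M}\rangle\subseteq\mathrm{K}_0(\mathrm{fp}(\mathcal{M}))$ for every $F\in\mathrm{eff}(\mathcal{M})$. What remains is to match this membership condition with finiteness of length.

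The main preliminary step is to describe $\mathrm{eff}(\mathcal{M})$ inside the abelian category $\mathrm{fp}(\mathcal{M})$. By Lemma \ref{lem1}, an effaceable functor is the contravariant defect $\delta^\ast$ of an $n$-exact sequence $\delta\colon0\to X_{n+1}\to\cdots\to X_0\to0$ in $\mathcal{M}$; as such a sequence is exact in $\modd\Lambda$, the map $X_1\to X_0$ is surjective and so $\delta^\ast$ vanishes on every projective $\Lambda$-module. Conversely, for $P\in\mathrm{proj}(\modd\Lambda)\subseteq\mathcal{M}$ the functor $\mathbb{H}(P)=\Hom_\Lambda(-,P)|_{\mathcal{M}}$ is a projective object of $\mathrm{fp}(\mathcal{M})$, so $F\mapsto F(P)$ is exact on $\mathrm{fp}(\mathcal{M})$; hence the functors in $\mathrm{fp}(\mathcal{M})$ that vanish on all such $P$ form a Serre subcategory, and by the previous remark this subcategory is exactly $\mathrm{eff}(\mathcal{M})$. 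It follows that the simple objects of $\mathrm{eff}(\mathcal{M})$ are precisely the functors $S_X\coloneqq\Hom_\Lambda(-,X)/\mathcal{J}_\Lambda(-,X)$ with $X$ a non-projective indecomposable object of $\mathcal{M}$, and by Theorem \ref{n-as}(1) together with Remark \ref{def-sim} each of these is the defect $\delta_X^\ast$ of the $n$-almost split sequence $\delta_X$ ending at $X$.

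Granting this, both implications are short. For $(1)\Rightarrow(2)$: given $F\in\mathrm{eff}(\mathcal{M})$, hypothesis (1) says $F$ has finite length, so it has a finite filtration in $\mathrm{fp}(\mathcal{M})$ whose subquotients, being in the Serre subcategory $\mathrm{eff}(\mathcal{M})$, are of the form $S_{X_j}=\delta_{X_j}^\ast$; summing the short exact sequences of the filtration in $\mathrm{K}_0(\mathrm{fp}(\mathcal{M}))$ gives $[F]=\sum_j[\delta_{X_j}^\ast]$, and Lemma \ref{lem2} yields (2). For $(2)\Rightarrow(1)$: given $F\in\mathrm{eff}(\mathcal{M})$, Lemma \ref{lem2} and (2) let us write $[F]=\sum_{k=1}^{r}c_k[\delta_k^\ast]$ as a finite integer combination of defects of $n$-almost split sequences, so Lemma \ref{kra3.6} gives $\supp(F)\subseteq\bigcup_{k=1}^{r}\supp(\delta_k^\ast)$; by Remark \ref{def-sim} each $\supp(\delta_k^\ast)$ is a single indecomposable, hence $\supp(F)$ is finite, and since $\mathcal{M}$ is Hom-finite and $F$ is finitely generated, Lemma \ref{kra3.4} shows $F$ has finite length.

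The main obstacle is the structural step in the second paragraph: showing that $\mathrm{eff}(\mathcal{M})$ is a Serre subcategory of $\mathrm{fp}(\mathcal{M})$ whose simple objects are exactly the defects of $n$-almost split sequences. This is what prevents a spurious ``projective'' simple $S_P$ from appearing among the composition factors of a finite-length effaceable functor, and it depends on the exactness of evaluation at projective $\Lambda$-modules on $\mathrm{fp}(\mathcal{M})$ and on $n$-exact sequences in $\mathcal{M}$ being genuine exact sequences in $\modd\Lambda$. Once that is in place, the rest is the routine bookkeeping with Grothendieck groups and supports sketched above.
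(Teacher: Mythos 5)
Your proof is correct and follows essentially the same route as the paper: both directions reduce to Lemma \ref{lem2}, with $(1)\Rightarrow(2)$ using that $\mathrm{eff}(\mathcal{M})$ is a Serre subcategory of $\mathrm{fp}(\mathcal{M})$ whose simple objects are the defects of $n$-almost split sequences, and $(2)\Rightarrow(1)$ using Lemmas \ref{kra3.6} and \ref{kra3.4} together with $\supp(\delta^\ast)=\{A_0\}$. The only difference is that where the paper cites \cite[Theorem 4.4]{EN1} for the Serre property, you derive it from the identification of $\mathrm{eff}(\mathcal{M})$ with the functors vanishing on projectives; that identification is fine, though you only argue one inclusion explicitly (the converse is immediate by evaluating a presentation $\Hom_\Lambda(-,X)\to\Hom_\Lambda(-,Y)\to F\to0$ at $\Lambda$, since $F(\Lambda)=0$ forces $X\to Y$ to be surjective).
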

\begin{proof}
$(1)\Rightarrow(2):$ Let $F\in\mathrm{eff}(\mathcal{M})$. By assumption, $F$ has a finite composition series. Let $S$ be a simple composition factor of $F$. Since by \cite[Theorem 4.4]{EN1}, $\mathrm{eff}(\mathcal{M})$ is a Serre subcategory of $\mathrm{fp}(\mathcal{M})$, $S\in \mathrm{eff}(\mathcal{M})$. Then there exists an $n$-almost split sequence $\delta:0\to A_{n+1}\to\cdots\to A_0\to0$ that $\delta^\ast=S$. So
$$[F]\in\langle[\delta^\ast]\,|\,\delta \text{\,\,is an $n$-almost split sequence in\,\,} \mathcal{M} \rangle.$$ Therefore the result follows from Lemma \ref{lem2}.

$(2)\Rightarrow(1):$ Let $F\in\mathrm{eff}(\mathcal{M})$. Then by Lemma \ref{lem2}, $F=\displaystyle\sum_{i=1}^{t}\lambda_i[\delta^\ast_i]$ where $\delta^\ast_i$ is a contravariant defect of an $n$-almost split
sequence $\delta_i:0\to A_{n+1}^i\to\cdots\to A_0^i\to0$ and $\lambda_i\in\mathbb{Z}$, for each $i=1,\ldots,t$. So $F\in\langle[\delta^\ast_i]\,|\,i=1,\ldots,t \rangle$. By Lemma \ref{kra3.6}, we have $\supp(F)\subseteq \displaystyle\bigcup_{i=1}^t\supp(\delta^\ast_i)$. On the other hand, we know that $\supp(\delta^\ast_i)=\{A_0^i\}$ for each $i=1,\ldots,t$. Thus $\supp(F)$ is finite and by Lemma \ref{kra3.4}, $F$ has finite length.
\end{proof}

For two modules $A$ and $B$ in $\modd\Lambda$, we denote by $\langle A,B\rangle$ the length of $\mathrm{Hom}_\Lambda(A,B)$ as $R$-module and we consider bilinear form $\langle\, ,\,\rangle:\mathrm{K}_0(\modd\Lambda,0)\times\mathrm{K}_0(\modd\Lambda,0)\rightarrow \mathbb{Z}$. Let $(\Lambda,\mathcal{M})$ be an $n$-homological pair.
We associate with every $A\in\ind(\mathcal{M})$ an element $\beta_A$ in $\mathrm{K}_0(\mathcal{M},0)$, in the following way:

If $A$ is indecomposable non-projective, then by Theorem \ref{n-as}, there exists an $n$-almost split sequence
$$0\to A_{n+1}\to\cdots\to A_1\to A\to0,$$
in $\mathcal{M}$. We set $\beta_A\coloneqq\displaystyle\sum_{i=0}^{n+1}(-1)^i[A_i]_0$, where $A_0=A$.

If $A$ is indecomposable projective, then we set $\beta_A\coloneqq[A]_0-\mathrm{Ind}_\mathcal{M}(\rad(A))$.
\begin{lemma}\label{au1}
Suppose that $(\Lambda,\mathcal{M})$ is an $n$-homological pair and $A\in\ind(\mathcal{M})$. Then the following statements hold:
\begin{itemize}
\item[(1)]
Let $l_A$ be the length of $\Endd_\Lambda(A)/\mathcal{J}_\Lambda(A,A)$ as $R$-module. For every $X\in\ind(\mathcal{M})$, we have
$$
\langle[X]_0,\beta_A\rangle\coloneqq\left\{
\begin{array}{lll}
0 \quad X\ncong A,\\
\\
l_A  \quad X\cong A.
\end{array}\right. $$
\item[(2)]
$\{\beta_A\,|\,A\in\ind(\mathcal{M})\}$ is linearly independent in $\mathrm{K}_0(\mathcal{M},0)$.
\end{itemize}
\end{lemma}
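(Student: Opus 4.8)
The plan is to deduce both parts from a single computation: evaluating the bilinear form $\langle\,,\,\rangle$ on each $\beta_A$ against the basis elements $[X]_0$, $X\in\ind(\mathcal{M})$, of $\mathrm{K}_0(\mathcal{M},0)$. First I would record that $\langle\,,\,\rangle$ makes sense on $\mathrm{K}_0(\modd\Lambda,0)$, and hence on $\mathrm{K}_0(\mathcal{M},0)$ viewed as the subgroup generated by the classes of indecomposables lying in $\mathcal{M}$: over an artin algebra every $\Hom_\Lambda(A,B)$ is a finite length $R$-module and $\Hom_\Lambda$ is biadditive. The only computational tool needed afterwards is additivity of $R$-length along exact sequences of $R$-modules.

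For part (1) when $A\in\ind(\mathcal{M})$ is non-projective, let $\delta\colon 0\to A_{n+1}\to\cdots\to A_1\to A_0=A\to0$ be the $n$-almost split sequence defining $\beta_A=\sum_{i=0}^{n+1}(-1)^i[A_i]_0$ and fix $X\in\ind(\mathcal{M})$. By Definition \ref{def2.3} and Remark \ref{def-sim} the sequence of $R$-modules
\[0\to\Hom_\Lambda(X,A_{n+1})\to\cdots\to\Hom_\Lambda(X,A_1)\to\Hom_\Lambda(X,A_0)\to\delta^\ast(X)\to0\]
is exact, where $\delta^\ast(X)=\Hom_\Lambda(X,A_0)/\mathcal{J}_\Lambda(X,A_0)$. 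Taking the alternating sum of $R$-lengths gives directly $\langle[X]_0,\beta_A\rangle=\mathrm{length}_R\,\delta^\ast(X)$, which by Remark \ref{def-sim} is $0$ when $X\ncong A$ and $l_A$ when $X\cong A$.

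For part (1) when $A\in\ind(\mathcal{M})$ is projective, $\beta_A=[A]_0-\mathrm{Ind}_{\mathcal{M}}(\rad A)$. Choosing a right $\mathcal{M}$-resolution $0\to t_{n-1}\to\cdots\to t_0\to\rad A\to0$ of $\rad A$ (Theorem \ref{reso}) and applying $\Hom_\Lambda(X,-)$, the same alternating-length computation yields $\langle[X]_0,\mathrm{Ind}_{\mathcal{M}}(\rad A)\rangle=\mathrm{length}_R\,\Hom_\Lambda(X,\rad A)$, so $\langle[X]_0,\beta_A\rangle=\mathrm{length}_R\bigl(\Hom_\Lambda(X,A)/\Hom_\Lambda(X,\rad A)\bigr)$, the quotient formed along the inclusion $\rad A\hookrightarrow A$ (left exactness of $\Hom$). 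The step I expect to be the main obstacle is the identification of this image of $\Hom_\Lambda(X,\rad A)$ inside $\Hom_\Lambda(X,A)$ with $\mathcal{J}_\Lambda(X,A)$, for $A$ indecomposable projective: a morphism $X\to A$ whose image is not contained in $\rad A$ is onto $A/\rad A$, hence onto $A$ by Nakayama and split since $A$ is projective, forcing $X\cong A$ and the map to be invertible; conversely a morphism landing in $\rad A$, post-composed with any $A\to X$, has image in $\rad X$ and is therefore a non-unit of the local ring $\Endd_\Lambda(X)$, so $1_X$ minus it is invertible. Granting this, $\langle[X]_0,\beta_A\rangle=\mathrm{length}_R\bigl(\Hom_\Lambda(X,A)/\mathcal{J}_\Lambda(X,A)\bigr)$, again $0$ for $X\ncong A$ and $l_A$ for $X\cong A$, finishing (1).

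Finally, for part (2), suppose $\sum_A c_A\beta_A=0$ in $\mathrm{K}_0(\mathcal{M},0)$ with $c_A\in\mathbb{Z}$ and almost all zero. Applying $\langle[X]_0,-\rangle$ for a fixed $X\in\ind(\mathcal{M})$ and invoking part (1), every term vanishes except $c_X l_X$, so $c_X l_X=0$; since $\Endd_\Lambda(X)$ is local, $l_X=\mathrm{length}_R\bigl(\Endd_\Lambda(X)/\mathcal{J}_\Lambda(X,X)\bigr)\ge1$, hence $c_X=0$. Letting $X$ range over $\ind(\mathcal{M})$ gives linear independence. Beyond the projective-case identification above, the argument is routine bookkeeping with alternating sums of $R$-lengths; the only other point worth watching is the finiteness of those sums, which is exactly what the artin algebra hypothesis supplies.
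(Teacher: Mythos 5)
Your proposal is correct and follows essentially the same route as the paper: evaluate the alternating sums of $R$-lengths against $[X]_0$ using the exactness of the $\Hom(X,-)$-sequences coming from the $n$-almost split sequence (non-projective case) and the right $\mathcal{M}$-resolution of $\rad(A)$ (projective case), then deduce independence by pairing against each $[A_j]_0$. The only cosmetic difference is that you prove the unified identification $\Hom_\Lambda(X,\rad A)=\mathcal{J}_\Lambda(X,A)$, whereas the paper states the two needed special cases ($\Hom_\Lambda(X,A)=\Hom_\Lambda(X,\rad A)$ for $X\ncong A$, and $\Hom_\Lambda(A,\rad A)=\mathcal{J}_\Lambda(A,A)$) directly.
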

\begin{proof}
(1) We have two cases. Case 1: Let $A$ be non-projective. Then there exists an $n$-almost split sequence
$$\delta_A:0\to A_{n+1}\to\cdots\to A_1\to A\to0,$$
in $\mathcal{M}$
 that for every $X\in\ind(\mathcal{M})$, we have $\langle[X]_0,\beta_A\rangle=\langle\delta^\ast_A(X)\rangle$, where $\langle\delta^\ast_A(X)\rangle$ is the length of $\delta^\ast_A(X)$ as $R$-module. Thus the result follows by Remark \ref{def-sim}. Case 2: Let $A$ be projective. We consider the right $\mathcal{M}$-resolution
$$0\to t_{n-1}\to\cdots\to t_1\to t_0\to\rad(A)\to0,$$
 for $\rad(A)$. By definition, we know that the following sequence is exact for any $X\in\ind(\mathcal{M})$,
 $$0\to\Hom_\Lambda(X,t_{n-1})\to\cdots\to\Hom_\Lambda(X,t_0)\to \Hom_\Lambda(X,\rad(A))\to 0.$$
Therefore $\langle[X]_0,\mathrm{Ind}_\mathcal{M}(\rad(A))\rangle=\langle[X]_0,[\rad(A)]_0\rangle$
and so for any $X\in\ind(\mathcal{M})$, we obtain
\begin{align*}
\langle[X]_0,\beta_A\rangle &=\langle[X]_0,[A]_0\rangle-\langle[X]_0,\mathrm{Ind}_\mathcal{M}(\rad(A))\rangle\\
&=\langle[X]_0,[A]_0\rangle-\langle[X]_0,[\rad(A)]_0\rangle.
\end{align*}
Assume that $X$ is not isomorphic to $A$. Since $\rad(A)$ is the unique maximal submodule of $A$, $\Hom_\Lambda(X,A)=\Hom_\Lambda(X,\rad(A))$ and therefore $\langle[X]_0,\beta_A\rangle=0$. Moreover, $\Hom_\Lambda(A,\rad(A))=\mathcal{J}_\Lambda(A,A)$  and hence $\langle[A]_0,\beta_A\rangle=l_A$.

(2)
Assume that $y=\displaystyle\sum_{i=1}^t\lambda_i\beta_{A_i}=0$ with $\lambda_i\in\mathbb{Z}$ and $A_i\in\ind(\mathcal{M})$, for each $1\leq i\leq t$. For each $j=1,\ldots,t$, we have
 $$\langle [A_j]_0,y\rangle= \displaystyle\sum_{i=1}^t\lambda_i\langle[A_j]_0,\beta_{A_i}\rangle=0.$$
Part (1) implies that
 $$\displaystyle\sum_{i=1}^t\lambda_i\langle[A_j]_0,\beta_{A_i}\rangle=\lambda_jl_{A_j}.$$
Then for each $j=1,\ldots,t$, $\lambda_jl_{A_j}=0$ and hence $\lambda_j=0$.
\end{proof}

\begin{lemma}\label{au2}
Let $(\Lambda,\mathcal{M})$ be an $n$-homological pair. If $\{\beta_A\,|\,A\in\ind(\mathcal{M})\}$ generates $\mathrm{K}_0(\mathcal{M},0)$, then $(\Lambda,\mathcal{M})$ is of finite type.
\end{lemma}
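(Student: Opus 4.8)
The plan is to transport the hypothesis across the isomorphism $\sigma\colon\mathrm{K}_0(\mathcal{M},0)\xrightarrow{\sim}\mathrm{K}_0(\mathrm{fp}(\mathcal{M}))$ constructed in the proof of Proposition \ref{f1}, and then run a support argument on a single, well-chosen functor. The first step is to compute $\sigma(\beta_A)$ for every $A\in\ind(\mathcal{M})$. Put $S_A:=\Hom_\Lambda(-,A)/\mathcal{J}_\Lambda(-,A)\big|_\mathcal{M}$; by Remark \ref{def-sim} this is a simple object of $\mathrm{fp}(\mathcal{M})$ with $\supp(S_A)=\{A\}$. I would show $\sigma(\beta_A)=[S_A]$ for all $A$. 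When $A$ is non-projective this is read off directly from the exact sequence in Remark \ref{def-sim}, which in $\mathrm{K}_0(\mathrm{fp}(\mathcal{M}))$ yields $[S_A]=\sum_{i=0}^{n+1}(-1)^i[\Hom_\Lambda(-,A_i)\big|_\mathcal{M}]=\sigma(\beta_A)$. When $A$ is projective, I would note that $\rad(A)\hookrightarrow A$ induces a monomorphism $\Hom_\Lambda(-,\rad(A))\big|_\mathcal{M}\hookrightarrow\Hom_\Lambda(-,A)\big|_\mathcal{M}$ whose cokernel, evaluated at an indecomposable $X$, is $\Hom_\Lambda(X,A)/\Hom_\Lambda(X,\rad(A))$; since $\rad(A)$ is the unique maximal submodule of $A$ this vanishes for $X\ncong A$ and equals $\Endd_\Lambda(A)/\mathcal{J}_\Lambda(A,A)$ for $X\cong A$ (the computation already made in the proof of Lemma \ref{au1}(1)), so the cokernel is $S_A$. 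Combining the resulting short exact sequence $0\to\Hom_\Lambda(-,\rad(A))\big|_\mathcal{M}\to\Hom_\Lambda(-,A)\big|_\mathcal{M}\to S_A\to 0$ in $\mathrm{fp}(\mathcal{M})$ with the fact that a right $\mathcal{M}$-resolution of $\rad(A)$ becomes an exact sequence in $\mathrm{fp}(\mathcal{M})$ after applying $\Hom_\Lambda(-,?)\big|_\mathcal{M}$ (Theorem \ref{reso}), and with $\beta_A=[A]_0-\mathrm{Ind}_\mathcal{M}(\rad(A))$, gives $\sigma(\beta_A)=[\Hom_\Lambda(-,A)\big|_\mathcal{M}]-[\Hom_\Lambda(-,\rad(A))\big|_\mathcal{M}]=[S_A]$.

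Since $\sigma$ is an isomorphism, the hypothesis now says that the classes $[S_A]$, $A\in\ind(\mathcal{M})$, generate $\mathrm{K}_0(\mathrm{fp}(\mathcal{M}))$. The decisive second step is to produce a finitely presented functor whose support is all of $\ind(\mathcal{M})$. For this I would use the injective cogenerator $D\Lambda$: being an injective $\Lambda$-module it satisfies $\Ext^i_\Lambda(\mathcal{M},D\Lambda)=0$ for $0<i<n$, hence $D\Lambda\in\mathcal{M}$, so $F:=\Hom_\Lambda(-,D\Lambda)\big|_\mathcal{M}$ is a (projective) object of $\mathrm{fp}(\mathcal{M})$, and $\Hom_\Lambda(X,D\Lambda)\cong DX\neq0$ for every nonzero $X\in\mathcal{M}$ shows $\supp(F)=\ind(\mathcal{M})$. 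Writing $[F]=\sum_{A\in T}\lambda_A[S_A]$ for a finite subset $T\subseteq\ind(\mathcal{M})$ with $\lambda_A\in\mathbb{Z}$ and applying Lemma \ref{kra3.6} (legitimate since $\mathcal{M}$ is Hom-finite), we get $\ind(\mathcal{M})=\supp(F)\subseteq\bigcup_{A\in T}\supp(S_A)=T$, which is finite. Hence $\mathcal{M}$ has an additive generator, i.e.\ $(\Lambda,\mathcal{M})$ is of finite type.

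The step I expect to be the main obstacle is precisely the choice of functor for the support argument. The naive move of pairing the basis elements $[X]_0$ against the $\beta_A$ through the bilinear form of Lemma \ref{au1} only shows that each indecomposable of $\mathcal{M}$ has finitely many predecessors, which is far too weak; what is actually needed is one functor of full support to feed into Lemma \ref{kra3.6}, and essentially the only natural candidate available is $\Hom_\Lambda(-,D\Lambda)\big|_\mathcal{M}$ — which is exactly why the membership $D\Lambda\in\mathcal{M}$ (a feature special to $n$-cluster tilting subcategories, as they contain all injectives) is the crucial input. The remainder is bookkeeping already in place in the excerpt: that $\mathrm{fp}(\mathcal{M})$ is abelian and $\mathcal{M}$ is Hom-finite so that Lemma \ref{kra3.6} applies, and that the identification $\sigma(\beta_A)=[S_A]$ goes through uniformly in the projective and non-projective cases.
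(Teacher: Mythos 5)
Your argument is correct, and it reaches the conclusion by a genuinely different mechanism than the paper, even though both proofs pivot on the same test object $D\Lambda\in\mathcal{M}$. The paper stays entirely inside $\mathrm{K}_0(\mathcal{M},0)$: it writes $[D\Lambda]_0=\sum_A\lambda_A\beta_A$ (finitely many nonzero $\lambda_A$), pairs against each $[X]_0$ via the bilinear form, and invokes the orthogonality relation $\langle[X]_0,\beta_A\rangle=l_A\delta_{X,A}$ already established in Lemma \ref{au1}(1) together with $\langle[X]_0,[D\Lambda]_0\rangle=\langle X,D\Lambda\rangle\neq0$ to force $\lambda_X\neq0$ for every $X\in\ind(\mathcal{M})$ --- a three-line argument once Lemma \ref{au1} is in hand. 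You instead transport everything through $\sigma$ into $\mathrm{K}_0(\mathrm{fp}(\mathcal{M}))$, identify $\sigma(\beta_A)=[S_A]$ with $\supp(S_A)=\{A\}$ (your case split between projective and non-projective $A$ is sound, and the non-projective case is literally the computation the paper performs as equation \eqref{eq5} in Theorem \ref{eqofgro}), and then apply the Krause--Vossieck support lemma (Lemma \ref{kra3.6}) to $F=\Hom_\Lambda(-,D\Lambda)|_\mathcal{M}$, whose support is all of $\ind(\mathcal{M})$ since $\Hom_\Lambda(X,D\Lambda)\cong DX\neq0$. The two routes are dual incarnations of one idea --- the bilinear form $\langle[X]_0,\beta_A\rangle$ is exactly the length of $S_A(X)$ --- so your functorial version buys conceptual transparency (it makes visible that the $\beta_A$ correspond to the simple objects of $\mathrm{fp}(\mathcal{M})$) at the cost of re-deriving the identification $\sigma(\beta_A)=[S_A]$, whereas the paper's bilinear-form version is shorter because Lemma \ref{au1}(1) has already packaged that orthogonality. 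Your closing remark correctly isolates the crux: without $D\Lambda\in\mathcal{M}$ furnishing one class of full support, neither argument gets off the ground.
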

\begin{proof}
It is clear that $D\Lambda\in\mathcal{M}$. By assumption, $[D\Lambda]_0=\displaystyle\sum_{A\in\ind(\mathcal{M})}\lambda_A\beta_A$ that $\lambda_A\neq0$ for only finitely many $A\in\ind(\mathcal{M})$. Since every $X\in\ind(\mathcal{M})$ has injective envelope in $\mathcal{M}$, we have
$$\langle[X]_0,[D\Lambda]_0\rangle=\displaystyle\sum_{A\in\ind(\mathcal{M})}\lambda_A\langle[X]_0,\beta_A\rangle\neq0.$$
By part (1) of Lemma \ref{au1}, $\displaystyle\sum_{A\in\ind(\mathcal{M})}\lambda_A\langle[X]_0,\beta_A\rangle=\lambda_Xl_X$, for every $X\in\ind(\mathcal{M})$. Therefore for every $X\in\ind(\mathcal{M})$,  $\lambda_X\neq0$ and so the number of indecomposable objects in $\mathcal{M}$ is finite.
\end{proof}

Now we are ready to state our main result in this paper.

\begin{theorem}\label{main3}
Let $(\Lambda,\mathcal{M})$ be an $n$-homological pair. Then the following statements are equivalent:
\begin{itemize}
\item[(1)]
The elements $\displaystyle\sum_{i=0}^{n+1}(-1)^i[A_i]_0$, for all $n$-almost split sequences $0\to A_{n+1}\to\cdots\to A_0\to0$ in $\mathcal{M}$ form a basis for the kernel of $\pi:\mathrm{K}_0(\mathcal{M},0)\rightarrow \mathrm{K}_0(\mathcal{M})$.
\item[(2)]
Every effaceable functor $F:\mathcal{M}\rightarrow\mathrm{Ab}$ has finite length.
\item[(3)]
$(\Lambda,\mathcal{M})$ is of finite type.
\end{itemize}
\end{theorem}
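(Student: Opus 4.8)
The plan is to prove the equivalence by establishing $(1)\Leftrightarrow(2)$, then $(3)\Rightarrow(2)$, and finally $(2)\Rightarrow(3)$, so that the three statements form a cycle. The first two steps are short and amount to a repackaging of results already at hand, while $(2)\Rightarrow(3)$ is the heart of the matter: there the bilinear-form input of Lemmas \ref{au1} and \ref{au2} has to be combined with the functor-category description furnished by Proposition \ref{main1}.

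\emph{The equivalence $(1)\Leftrightarrow(2)$ and the implication $(3)\Rightarrow(2)$.} For $(1)\Rightarrow(2)$: a basis for $\mathrm{Ker}(\pi)$ is in particular a generating set, so Proposition \ref{main1} gives $(2)$. For $(2)\Rightarrow(1)$: by Proposition \ref{main1}, $\mathrm{Ker}(\pi)$ is generated by the elements $\sum_{i=0}^{n+1}(-1)^i[A_i]_0$ attached to the $n$-almost split sequences; by Theorem \ref{n-as} and the uniqueness of $n$-almost split sequences, these are exactly the elements $\beta_{A_0}$ with $A_0$ ranging over the non-projective indecomposable objects of $\mathcal{M}$, i.e.\ a subfamily of $\{\beta_A\mid A\in\ind(\mathcal{M})\}$. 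Since this family is linearly independent by Lemma \ref{au1}(2), so is any subfamily, and hence this generating set of $\mathrm{Ker}(\pi)$ is automatically a basis. For $(3)\Rightarrow(2)$: if $(\Lambda,\mathcal{M})$ is of finite type then $\ind(\mathcal{M})$ is a finite set, any effaceable functor $F$ is finitely presented hence finitely generated, and $\supp(F)\subseteq\ind(\mathcal{M})$ is finite, so Lemma \ref{kra3.4} (using that $\mathcal{M}$ is Hom-finite) shows that $F$ has finite length.

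\emph{The implication $(2)\Rightarrow(3)$.} The plan is to show that $(2)$ forces the family $\{\beta_A\mid A\in\ind(\mathcal{M})\}$ to generate $\mathrm{K}_0(\mathcal{M},0)$, and then to invoke Lemma \ref{au2}. Write $V\coloneqq\langle\beta_A\mid A\in\ind(\mathcal{M})\rangle$. Using $(2)\Rightarrow(1)$, the non-projective $\beta_A$ form a basis of $\mathrm{Ker}(\pi)$, so $\mathrm{Ker}(\pi)\subseteq V$ and it is enough to prove $\pi(V)=\mathrm{K}_0(\mathcal{M})$. Since $\pi(\beta_A)=0$ for $A$ non-projective, $\pi(V)$ is generated by the finitely many elements $\pi(\beta_P)=[P]-\pi(\mathrm{Ind}_{\mathcal{M}}(\rad P))$ with $P$ indecomposable projective. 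The decisive ingredient I would use is the isomorphism $\mathrm{K}_0(\mathcal{M})\cong\mathrm{K}_0(\modd\Lambda)$, valid under hypothesis $(2)$ and induced by $[X]\mapsto[X]$: it is well defined because $n$-exact sequences in $\mathcal{M}$ are exact sequences of $\Lambda$-modules, surjective by the right $\mathcal{M}$-resolutions of Theorem \ref{reso}, and its injectivity is what I would extract from the functor category --- via Proposition \ref{f1}, which identifies $\mathrm{K}_0(\mathcal{M},0)$ with $\mathrm{K}_0(\mathrm{fp}(\mathcal{M}))$ in such a way that $\mathrm{Ker}(\pi)$ becomes the subgroup spanned by the classes of effaceable functors, together with the fact (already used in Proposition \ref{main1}) that $\mathrm{eff}(\mathcal{M})$ is a Serre subcategory of $\mathrm{fp}(\mathcal{M})$ which, under $(2)$, is a length category whose simple objects are exactly the defects $\delta^\ast$ of $n$-almost split sequences. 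Granting this isomorphism, $\pi(\beta_P)$ is carried to $[P]-[\rad P]=[P/\rad P]\in\mathrm{K}_0(\modd\Lambda)$; as $P$ runs over the finitely many indecomposable projectives, $P/\rad P$ runs over all simple $\Lambda$-modules, whose classes form a $\mathbb{Z}$-basis of $\mathrm{K}_0(\modd\Lambda)$. Hence $\pi(V)$ is carried onto all of $\mathrm{K}_0(\modd\Lambda)$, and injectivity gives $\pi(V)=\mathrm{K}_0(\mathcal{M})$, so $V=\mathrm{K}_0(\mathcal{M},0)$; Lemma \ref{au2} then yields that $(\Lambda,\mathcal{M})$ is of finite type.

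The step I expect to be the real obstacle is the isomorphism $\mathrm{K}_0(\mathcal{M})\cong\mathrm{K}_0(\modd\Lambda)$ under $(2)$, together with the clean identification of $\pi(\beta_P)$ with the class of the simple top $P/\rad P$ under it. Carrying this out requires: that $n$-exact sequences in $\mathcal{M}$ really are exact sequences of $\Lambda$-modules; that the index map $\mathrm{Ind}_{\mathcal{M}}$ behaves well enough, with respect to the right $\mathcal{M}$-resolution of $\rad P$, for the computation $[P]-\pi(\mathrm{Ind}_{\mathcal{M}}(\rad P))\mapsto[P]-[\rad P]$ to go through; and, most delicately, the identification under $(2)$ of the simple effaceable functors with the functors $\Hom_\Lambda(-,A_0)/\mathcal{J}_\Lambda(-,A_0)$ attached to non-projective indecomposable $A_0$ --- this is exactly the bridge between the functorial description in Proposition \ref{main1} and the bilinear-form computations in Lemmas \ref{au1} and \ref{au2}. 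A more routine but essential point is the bijection between $n$-almost split sequences with indecomposable end terms and non-projective indecomposable objects of $\mathcal{M}$, which is what makes the generating set in $(1)$ coincide with the non-projective part of the family in Lemma \ref{au2}.
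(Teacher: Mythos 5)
Your proposal is correct and follows essentially the same route as the paper: $(1)\Leftrightarrow(2)$ via Proposition \ref{main1} together with the linear independence from Lemma \ref{au1}(2), $(3)\Rightarrow(2)$ via Lemma \ref{kra3.4}, and $(2)\Rightarrow(3)$ by showing that $\{\beta_A\mid A\in\ind(\mathcal{M})\}$ generates $\mathrm{K}_0(\mathcal{M},0)$ and invoking Lemma \ref{au2}. The isomorphism $\mathrm{K}_0(\mathcal{M})\cong\mathrm{K}_0(\modd\Lambda)$ that you rightly single out as the crux is exactly the paper's Theorem \ref{eqofgro}, proved there with the ingredients you list (the index map, the restriction result of Proposition \ref{f2}, the Serre property of $\mathrm{eff}(\mathcal{M})$, and Auslander's defect formula to rule out projective supports of the simple composition factors).
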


For the proof of the implication $(2)\Rightarrow(3)$ in the above theorem we need to show that if every effaceable functor $F:\mathcal{M}\rightarrow\mathrm{Ab}$ has finite length then $\mathrm{K}_0(\mathcal{M})\cong\mathrm{K}_0(\modd\Lambda).$ Note that Reid in \cite[Theorem 3.7]{Re} proved that if an $n$-homological pair $(\Lambda,\mathcal{M})$ is of finite type, then $\mathrm{K}_0(\mathcal{M})\cong\mathrm{K}_0(\modd\Lambda)$. But in our case we do not have finiteness assumption for $n$-homological pair $(\Lambda,\mathcal{M})$. For proving this result we need the following proposition.

\begin{proposition}\label{f2}
Let $(\Lambda,\mathcal{M})$ be an $n$-homological pair. Then the following statements hold:
\begin{itemize}
\item[(1)]
If $F\in\mathrm{fp}(\modd\Lambda)$, then $F|_{\mathcal{M}}\in\mathrm{fp}(\mathcal{M})$.
\item[(2)]
If $F\in\mathrm{eff}(\modd\Lambda)$, then $F|_{\mathcal{M}}\in\mathrm{eff}(\mathcal{M})$.
\end{itemize}
\end{proposition}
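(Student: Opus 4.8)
The plan is to build $F|_{\mathcal{M}}$ explicitly out of Iyama's right $\mathcal{M}$-resolutions (Theorem \ref{reso}) and to detect effaceability by testing functors on finitely generated projective $\Lambda$-modules, exploiting the inclusion $\mathrm{proj}(\modd\Lambda)\subseteq\mathcal{M}$. The first thing I would record is the auxiliary observation that $\Hom_\Lambda(-,A)|_{\mathcal{M}}\in\mathrm{fp}(\mathcal{M})$ for every $A\in\modd\Lambda$: Theorem \ref{reso} supplies a right $\mathcal{M}$-resolution $0\to t_{n-1}^A\to\cdots\to t_0^A\to A\to 0$ whose associated sequence of representable functors is exact on $\mathcal{M}$, and its last two steps form an exact sequence $\Hom_\Lambda(-,t_1^A)|_{\mathcal{M}}\to\Hom_\Lambda(-,t_0^A)|_{\mathcal{M}}\to\Hom_\Lambda(-,A)|_{\mathcal{M}}\to 0$ in $\Mod(\mathcal{M})$ with representable first two terms (when $n=1$ one has $\mathcal{M}=\modd\Lambda$ and there is nothing to prove). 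For statement (1), I would take a presentation $\Hom_\Lambda(-,X)\to\Hom_\Lambda(-,Y)\to F\to 0$ of $F\in\mathrm{fp}(\modd\Lambda)$ and apply the exact restriction functor $(-)|_{\mathcal{M}}\colon\Mod(\modd\Lambda)\to\Mod(\mathcal{M})$, obtaining an exact sequence $\Hom_\Lambda(-,X)|_{\mathcal{M}}\to\Hom_\Lambda(-,Y)|_{\mathcal{M}}\to F|_{\mathcal{M}}\to 0$ whose first two terms lie in $\mathrm{fp}(\mathcal{M})$ by the auxiliary observation; since $\mathrm{fp}(\mathcal{M})$ is closed under cokernels in $\Mod(\mathcal{M})$, this gives $F|_{\mathcal{M}}\in\mathrm{fp}(\mathcal{M})$.

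For statement (2), I would fix a presentation $\Hom_\Lambda(-,X)\xrightarrow{(-,f)}\Hom_\Lambda(-,Y)\to F\to 0$ with $f$ an epimorphism. Evaluating at any $P\in\mathrm{proj}(\modd\Lambda)$ shows $F(P)=0$, hence $F|_{\mathcal{M}}(P)=0$ for every $P\in\mathrm{proj}(\modd\Lambda)\subseteq\mathcal{M}$, while part (1) already gives $F|_{\mathcal{M}}\in\mathrm{fp}(\mathcal{M})$. Next I would use the right $\mathcal{M}$-approximation $t_0^Y\to Y$ coming from the resolution of $Y$ to form the composite of surjections $\Hom_\Lambda(-,t_0^Y)|_{\mathcal{M}}\twoheadrightarrow\Hom_\Lambda(-,Y)|_{\mathcal{M}}\twoheadrightarrow F|_{\mathcal{M}}$; since $F|_{\mathcal{M}}$ is finitely presented and $\Hom_\Lambda(-,t_0^Y)|_{\mathcal{M}}$ is a finitely generated projective $\mathcal{M}$-module, the kernel $L$ of this composite is finitely generated, so there are $W\in\mathcal{M}$ and an epimorphism $\Hom_\Lambda(-,W)|_{\mathcal{M}}\twoheadrightarrow L$. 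By the Yoneda lemma this produces a morphism $w\colon W\to t_0^Y$ in $\mathcal{M}$ together with a presentation $\Hom_\Lambda(-,W)|_{\mathcal{M}}\xrightarrow{(-,w)}\Hom_\Lambda(-,t_0^Y)|_{\mathcal{M}}\to F|_{\mathcal{M}}\to 0$.

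What remains, and what I expect to be the genuinely delicate step, is to verify that $w$ is an epimorphism: one cannot read this off from abstract nonsense, and everything hinges on $\mathrm{proj}(\modd\Lambda)\subseteq\mathcal{M}$. I would choose a projective cover $p\colon P_0\to t_0^Y$ in $\modd\Lambda$; evaluating the last presentation at $P_0\in\mathrm{proj}(\modd\Lambda)\subseteq\mathcal{M}$ and using $F|_{\mathcal{M}}(P_0)=0$ shows that $\Hom_\Lambda(P_0,W)\to\Hom_\Lambda(P_0,t_0^Y)$ is surjective, so $p$ factors through $w$; since $p$ is an epimorphism, $w$ is an epimorphism in $\modd\Lambda$, and hence in the full subcategory $\mathcal{M}$. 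Therefore the presentation above exhibits $F|_{\mathcal{M}}$ as an effaceable $\mathcal{M}$-module — equivalently, by Lemma \ref{lem1}, as the contravariant defect of an $n$-exact sequence in $\mathcal{M}$ — which proves (2). The main obstacle is thus arranging a presentation of $F|_{\mathcal{M}}$ whose connecting map is truly epic in $\mathcal{M}$, and the key leverage is that $F$ vanishes on $\mathrm{proj}(\modd\Lambda)$, a property that both survives restriction to $\mathcal{M}$ and can be tested through projective covers.
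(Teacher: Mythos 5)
Your proposal is correct. Part (1) is essentially the paper's argument: restrict a presentation of $F$ to $\mathcal{M}$, use the right $\mathcal{M}$-resolutions of Theorem \ref{reso} to see that $\Hom_\Lambda(-,X)|_{\mathcal{M}}$ and $\Hom_\Lambda(-,Y)|_{\mathcal{M}}$ are finitely presented, and conclude by closure of $\mathrm{fp}(\mathcal{M})$ under cokernels. For part (2) your mechanism is genuinely different. The paper keeps an arbitrary presentation $\Hom_{\mathcal{M}}(-,t_2)\xrightarrow{(-,g)}\Hom_{\mathcal{M}}(-,t_1)\to F|_{\mathcal{M}}\to 0$ and shows $g$ is epic by effacing the Yoneda generator $x\in F(t_1)$: effaceability of $F$ in $\modd\Lambda$ gives an epimorphism $\psi\colon Z\to t_1$ with $F(\psi)(x)=0$, a right $\mathcal{M}$-approximation $\varphi\colon t\to Z$ pushes it into $\mathcal{M}$, and projectivity of $\Hom_\Lambda(-,t)|_{\mathcal{M}}$ lets $\psi\varphi$ factor through $g$, forcing $g$ to be surjective. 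You instead observe that $F$ (hence $F|_{\mathcal{M}}$) vanishes on $\mathrm{proj}(\modd\Lambda)$, manufacture a presentation $\Hom_\Lambda(-,W)|_{\mathcal{M}}\xrightarrow{(-,w)}\Hom_\Lambda(-,t_0^Y)|_{\mathcal{M}}\to F|_{\mathcal{M}}\to 0$ via Schanuel, and detect surjectivity of $w$ by evaluating at a projective cover of $t_0^Y$. Each step of yours checks out: the kernel of an epimorphism from a finitely generated projective onto a finitely presented module is finitely generated, and the lifting of $p\colon P_0\to t_0^Y$ through $w$ is exactly what $F|_{\mathcal{M}}(P_0)=0$ provides. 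What your route buys is that it isolates the clean criterion ``finitely presented and vanishing on $\mathrm{proj}(\modd\Lambda)$ implies effaceable'' and avoids the element-wise Grothendieck characterization of effaceability (the existence, for each $x\in F(C)$, of an epimorphism killing $x$), which the paper invokes but does not derive from its presentation-based definition; what it costs is having to change the presentation rather than working with whichever one part (1) hands you.
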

\begin{proof}
$(1)$ Let $F\in\mathrm{fp}(\modd\Lambda)$. There exists a presentation
$$
\Hom_\Lambda(-,X)\xrightarrow {(-,f)}\Hom_\Lambda(-,Y)\rightarrow F\rightarrow 0.
$$
Specially, the sequence of functors
$$
\Hom_\Lambda(-,X)|_{\mathcal{M}}\xrightarrow {(-,f)}\Hom_\Lambda(-,Y)|_{\mathcal{M}}\rightarrow F|_{\mathcal{M}}\rightarrow0
$$
 is exact in $\Mod(\mathcal{M})$.
By Theorem \ref{reso}, for $X, Y\in\modd\Lambda$  there exist right $\mathcal{M}$-resolutions:
\begin{align*}
0\rightarrow t_{n-1}^X\rightarrow\cdots\rightarrow t_0^X \rightarrow X\rightarrow0,\\
0\rightarrow t_{n-1}^Y\rightarrow\cdots\rightarrow t_0^Y \rightarrow Y\rightarrow0,
\end{align*}
such that $t_i^X$ and $t_i^Y$ belong to $\mathcal{M}$, for $i=0,\ldots,n-1$ and the following sequences are exact:
\begin{align*}
0\to\Hom_\Lambda(-,t_{n-1}^X)|_{\mathcal{M}}\to\cdots\to\Hom_\Lambda(-,t_0^X)|_{\mathcal{M}}\to \Hom_\Lambda(-,X)|_{\mathcal{M}}\to 0,\\
0\to\Hom_\Lambda(-,t_{n-1}^Y)|_{\mathcal{M}}\to\cdots\to\Hom_\Lambda(-,t_0^Y)|_{\mathcal{M}}\to \Hom_\Lambda(-,Y)|_{\mathcal{M}}\to 0.
\end{align*}
So $\Hom_\Lambda(-,X)|_{\mathcal{M}}$  and $\Hom_\Lambda(-,Y)|_{\mathcal{M}}$ belong to $\mathrm{fp}(\mathcal{M})$ and by \cite[Proposition 4.2(b)]{Au1}, $F|_\mathcal{M}\in\mathrm{fp}(\mathcal{M})$.

$(2)$ Let $F\in\mathrm{eff}(\modd\Lambda)$. There exists a presentation
$$
\Hom_\Lambda(-,X)\xrightarrow {(-,f)}\Hom_\Lambda(-,Y)\rightarrow F\rightarrow 0,
$$
 with epimorphism $f$. By part (1), $F|_\mathcal{M}\in\mathrm{fp}(\mathcal{M})$. Then we have an exact sequence
$$\Hom_{\mathcal{M}}(-,t_2)\xrightarrow {(-,g)}\Hom_{\mathcal{M}}(-,t_1)\xrightarrow {\theta} F|_{\mathcal{M}}\rightarrow0,$$
 with $t_1,t_2\in\mathcal{M}$.
In order to complete the proof, it is enough to show that $g$ is an epimorphism.
By the Yoneda's lemma, there exists $x\in F|_{\mathcal{M}}(t_1)$ corresponding to the functorial morphism $\theta$. By assumption, $F\in\mathrm{eff}(\modd\Lambda)$. So for $t_1\in\mathcal{M}$ and $x\in F|_{\mathcal{M}}(t_1)=F(t_1)$, there exists an epimorphism $\psi:Z\rightarrow t_1$ in $\modd\Lambda$ which $F(\psi)(x)=0$.
Consider right $\mathcal{M}$-approximation $\varphi:t\rightarrow Z$ for $Z$. Then the composition $\psi\circ\varphi$ is an epimorphism in $\mathcal{M}$ and we have $F|_{\mathcal{M}}(\psi\circ\varphi)(x)=F(\psi\circ\varphi)(x)=F(\varphi)\circ F(\psi)(x)=0$. Therefore by the Yoneda's lemma, $\theta\circ(-,\psi\circ\varphi)=0$. Then $\mathrm{Im}(-,\psi\circ\varphi)\subseteq\mathrm{Ker}(\theta)$ and hence $\mathrm{Im}(-,\psi\circ\varphi)\subseteq\mathrm{Im}(-,g)$.
Now, consider the following diagram
\begin{center}
\begin{tikzpicture}
\node (X1) at (4,3) {$\Hom_\Lambda(-,t)|_{\mathcal{M}}$};
\node (X2) at (0,1) {$\Hom_\Lambda(-,t_2)|_{\mathcal{M}}$};
\node (X3) at (4,1) {$\mathrm{Im}(-,g)$};
\node (X4) at (7,1) {$0$.};
\draw [->,thick] (X1) -- (X3) node [midway,right] {$(-,\psi\circ\varphi)$};
\draw [->,thick] (X2) -- (X3) node [midway,above] {$(-,g)$};
\draw [->,thick] (X3) -- (X4) node [midway,above] {};

\end{tikzpicture}
\end{center}
$\Hom_\Lambda(-,t)|_{\mathcal{M}}\in \mathrm{proj(fp(}\mathcal{M}))$, so there exists $h:t\rightarrow t_2$ such that $g\circ h= \psi\circ\varphi$. Since $\psi\circ\varphi$ is an epimorphism in $\mathcal{M}$, $g$ is an epimorphism and the result follows.
\end{proof}

\begin{theorem}\label{eqofgro}
 Let $(\Lambda,\mathcal{M})$ be an $n$-homological pair. If every effaceable functor $F:\mathcal{M}\rightarrow \mathrm{Ab}$ has finite length, then $\mathrm{K}_0(\mathcal{M})\cong\mathrm{K}_0(\modd\Lambda).$
\end{theorem}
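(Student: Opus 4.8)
The plan is to produce mutually inverse group homomorphisms between $\mathrm{K}_0(\mathcal{M})$ and $\mathrm{K}_0(\modd\Lambda)$ by comparing both with the Grothendieck group of an appropriate functor category. First I would invoke Proposition \ref{f1}, which already identifies $\mathrm{K}_0(\mathcal{M},0)\cong\mathrm{K}_0(\mathrm{fp}(\mathcal{M}))$ via the isomorphism $\sigma=\eta\circ\mathbb{H}$; the natural projection $\pi\colon\mathrm{K}_0(\mathcal{M},0)\to\mathrm{K}_0(\mathcal{M})$ then corresponds, under $\sigma$, to quotienting $\mathrm{K}_0(\mathrm{fp}(\mathcal{M}))$ by the classes $[\delta^\ast]$ of contravariant defects of $n$-exact sequences, i.e.\ (by Lemma \ref{lem1}) by the subgroup generated by the classes of all effaceable functors in $\mathrm{fp}(\mathcal{M})$. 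So $\mathrm{K}_0(\mathcal{M})\cong\mathrm{K}_0(\mathrm{fp}(\mathcal{M}))/\langle[F]\mid F\in\mathrm{eff}(\mathcal{M})\rangle$, and since $\mathrm{eff}(\mathcal{M})$ is a Serre subcategory of $\mathrm{fp}(\mathcal{M})$ (\cite[Theorem 4.4]{EN1}) with $\mathrm{fp}(\mathcal{M})$ of finite global dimension $\le n+1$, the quotient is identified with $\mathrm{K}_0\!\left(\mathrm{fp}(\mathcal{M})/\mathrm{eff}(\mathcal{M})\right)$ by the localization sequence for Grothendieck groups of abelian categories. The classical analogue (e.g.\ \cite{Kra}) gives, in the same way, $\mathrm{K}_0(\modd\Lambda)\cong\mathrm{K}_0(\mathrm{fp}(\modd\Lambda)/\mathrm{eff}(\modd\Lambda))$; here one uses that $\mathrm{fp}(\modd\Lambda)$ also has finite global dimension and that $\mathrm{eff}(\modd\Lambda)$ is Serre in it, the quotient category being equivalent to $\modd\Lambda$ itself.

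The heart of the argument is then to show that restriction along $\mathcal{M}\hookrightarrow\modd\Lambda$ induces an equivalence (or at least a $\mathrm{K}_0$-isomorphism) between the two Serre quotients $\mathrm{fp}(\modd\Lambda)/\mathrm{eff}(\modd\Lambda)$ and $\mathrm{fp}(\mathcal{M})/\mathrm{eff}(\mathcal{M})$. Proposition \ref{f2} is tailored for exactly this: it says the restriction functor $(-)|_{\mathcal{M}}$ sends $\mathrm{fp}(\modd\Lambda)$ into $\mathrm{fp}(\mathcal{M})$ and $\mathrm{eff}(\modd\Lambda)$ into $\mathrm{eff}(\mathcal{M})$, hence descends to an exact functor $\overline{(-)|_{\mathcal{M}}}$ between the quotients. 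I would then check that this induced functor is an equivalence: essential surjectivity and faithfulness follow from the $\mathcal{M}$-resolution machinery (Theorem \ref{reso}), which shows every object of $\mathrm{fp}(\mathcal{M})$ has a finite resolution by representable functors $\Hom_\Lambda(-,t_i)|_{\mathcal{M}}$ with $t_i\in\mathcal{M}\subseteq\modd\Lambda$, so modulo effaceables every object and every morphism of $\mathrm{fp}(\mathcal{M})$ comes from $\modd\Lambda$; fullness is handled similarly by lifting a morphism on resolutions and correcting it by an effaceable term. This yields $\mathrm{K}_0(\mathrm{fp}(\modd\Lambda)/\mathrm{eff}(\modd\Lambda))\cong\mathrm{K}_0(\mathrm{fp}(\mathcal{M})/\mathrm{eff}(\mathcal{M}))$, and chaining all the isomorphisms gives $\mathrm{K}_0(\mathcal{M})\cong\mathrm{K}_0(\modd\Lambda)$.

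The place where the finite-length hypothesis on effaceable functors $\mathcal{M}\to\mathrm{Ab}$ is actually needed — and the step I expect to be the main obstacle — is in making the localization identification $\mathrm{K}_0(\mathrm{fp}(\mathcal{M}))/\langle[\mathrm{eff}(\mathcal{M})]\rangle\cong\mathrm{K}_0(\mathrm{fp}(\mathcal{M})/\mathrm{eff}(\mathcal{M}))$ rigorous. The devissage/localization theorem for $\mathrm{K}_0$ requires control of the Serre subcategory $\mathrm{eff}(\mathcal{M})$: one wants every object of $\mathrm{eff}(\mathcal{M})$ to be built (in $\mathrm{K}_0$) from its simple composition factors, and for that one needs every object of $\mathrm{eff}(\mathcal{M})$ to have finite length — which is precisely the hypothesis. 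Under that hypothesis, the simple objects of $\mathrm{eff}(\mathcal{M})$ are exactly the defects $\delta^\ast$ of $n$-almost split sequences (Remark \ref{def-sim}), so $[F]$ for $F\in\mathrm{eff}(\mathcal{M})$ lies in the span of these, and the quotient by $\langle[\mathrm{eff}(\mathcal{M})]\rangle$ behaves correctly. So concretely I would: (i) fix the localization isomorphism on the $\mathcal{M}$-side using finite length of effaceables; (ii) fix the localization isomorphism on the $\modd\Lambda$-side (standard, no hypothesis needed, since $\mathrm{fp}(\modd\Lambda)/\mathrm{eff}(\modd\Lambda)\simeq\modd\Lambda$); (iii) verify via Proposition \ref{f2} and Theorem \ref{reso} that restriction gives an equivalence of the two quotient categories; (iv) assemble the chain of isomorphisms. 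The only subtlety besides step (i) is checking compatibility of all these identifications with the canonical projections, which is routine diagram-chasing.
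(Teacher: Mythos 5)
Your route is genuinely different from the paper's. The paper constructs two explicit, mutually inverse homomorphisms: $g\colon\mathrm{K}_0(\mathcal{M})\to\mathrm{K}_0(\modd\Lambda)$ induced by the inclusion, and $g'\colon\mathrm{K}_0(\modd\Lambda)\to\mathrm{K}_0(\mathcal{M})$ induced by the index map $\mathrm{Ind}_\mathcal{M}$. All of the work, and the only use of the finite-length hypothesis, is in showing that $g'$ is well defined: for a short exact sequence $\delta$ in $\modd\Lambda$ one identifies $\sigma(\mathrm{Ind}_\mathcal{M}(X)-\mathrm{Ind}_\mathcal{M}(Y)+\mathrm{Ind}_\mathcal{M}(Z))$ with $[\delta^\ast|_\mathcal{M}]$ in $\mathrm{K}_0(\mathrm{fp}(\mathcal{M}))$, uses the hypothesis to write $\delta^\ast|_\mathcal{M}$ as a finite sum of simple composition factors, shows via the Auslander defect formula that none of these is supported on a projective, realizes each as the defect of an $n$-almost split sequence, and concludes that the element lies in $\mathrm{Ker}(\pi)$. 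Your proposal instead passes through Serre quotients and the localization sequence, and then compares $\mathrm{fp}(\modd\Lambda)/\mathrm{eff}(\modd\Lambda)$ with $\mathrm{fp}(\mathcal{M})/\mathrm{eff}(\mathcal{M})$. This can be made to work, but the comparison step you treat as routine --- that restriction induces an equivalence of the two quotients, i.e.\ that $\mathrm{fp}(\mathcal{M})/\mathrm{eff}(\mathcal{M})\simeq\modd\Lambda$ --- is precisely the higher Auslander formula of \cite{EN1} and carries essentially all of the mathematical weight; your sketch of essential surjectivity, fullness and faithfulness is not yet a proof, though the result is available to cite.

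The one genuine error is your diagnosis of where the finite-length hypothesis enters. The localization sequence $\mathrm{K}_0(\mathcal{S})\to\mathrm{K}_0(\mathcal{A})\to\mathrm{K}_0(\mathcal{A}/\mathcal{S})\to0$ is exact for an arbitrary Serre subcategory $\mathcal{S}$ of an essentially small abelian category $\mathcal{A}$: no d\'evissage and no length condition is needed, since one only requires that the kernel of $\mathrm{K}_0(\mathcal{A})\to\mathrm{K}_0(\mathcal{A}/\mathcal{S})$ is the subgroup generated by the classes $[S]$ with $S\in\mathcal{S}$. So the identification $\mathrm{K}_0(\mathrm{fp}(\mathcal{M}))/\langle[F]\mid F\in\mathrm{eff}(\mathcal{M})\rangle\cong\mathrm{K}_0\bigl(\mathrm{fp}(\mathcal{M})/\mathrm{eff}(\mathcal{M})\bigr)$, which you single out as the ``main obstacle'' requiring the hypothesis, is unconditional, and your argument as designed never actually uses the assumption that effaceable functors have finite length. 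That should have been a red flag: either you are (legitimately, granting \cite{EN1}) proving a statement stronger than Theorem \ref{eqofgro} and should say so, or the hypothesis has to be located honestly --- in the paper's proof it is used exactly to decompose $\delta^\ast|_\mathcal{M}$ into defects of $n$-almost split sequences, a step your route replaces by the citation to the higher Auslander formula. As written, the proposal asserts that a step needs the hypothesis when it does not, and leaves unproved the step that does the real work.
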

\begin{proof} Naturally, there exists a morphism
\begin{align*}
i:\mathrm{K}_0(\mathcal{M},0) & \rightarrow \mathrm{K}_0(\modd\Lambda,0).\\
 [X]_0&\mapsto [X]_0
\end{align*}
By \cite[Lemma 3.3]{Re}, $i$ induces a well-defined group homomorphism
 \begin{align*}
g:\mathrm{K}_0(\mathcal{M}) \rightarrow \mathrm{K}_0(\modd\Lambda).
\end{align*}
Let $\pi:\mathrm{K}_0(\mathcal{M},0)\rightarrow \mathrm{K}_0(\mathcal{M})$ be the natural projection. Consider the map
\begin{align*}
g^\prime:\mathrm{K}_0(\modd\Lambda)  \longrightarrow \mathrm{K}_0(\mathcal{M}).\\
 [X]\mapsto \mathrm{Ind}_\mathcal{M}(X)+\mathrm{Ker}(\pi)
\end{align*}
We show that $g^\prime$ is a well-defined group homomorphism. For this purpose, it is enough to show that for every short exact sequence
 $0\rightarrow X\rightarrow Y\rightarrow Z\rightarrow 0$
 in $\modd\Lambda$,
 $$\pi(\mathrm{Ind}_\mathcal{M}(X)-\mathrm{Ind}_\mathcal{M}(Y)+\mathrm{Ind}_\mathcal{M}(Z))=0.$$
Let $\delta:0\rightarrow X\rightarrow Y\rightarrow Z\rightarrow 0$ be a short exact sequence in $\modd\Lambda$. There exists an exact sequence
 \begin{align*}
0\to\Hom_\Lambda(-,X)\to\Hom_\Lambda(-,Y)\to \Hom_\Lambda(-,Z)\to\delta^\ast\to 0,
\end{align*}
where $\delta^\ast$ is the contravariant defect of $\delta$.
Specially, the sequence
 \begin{align*}
0\to\Hom_\Lambda(-,X)|_\mathcal{M}\to\Hom_\Lambda(-,Y)|_\mathcal{M}\to \Hom_\Lambda(-,Z)|_\mathcal{M}\to\delta^\ast|_\mathcal{M}\to 0,
\end{align*}
is exact. By Proposition \ref{f2}, this sequence lies in $\mathrm{fp}(\mathcal{M})$ and therefore
\begin{equation}\label{eq1}
[\delta^\ast|_\mathcal{M}]=[\Hom_\Lambda(-,X)|_\mathcal{M}]-[\Hom_\Lambda(-,Y)|_\mathcal{M}]+[\Hom_\Lambda(-,Z)|_\mathcal{M}],
\end{equation}
 in $\mathrm{K}_0(\mathrm{fp}(\mathcal{M}))$.
On the other hand, according to Theorem \ref{reso}, for any $A\in\modd\Lambda$ there exists the right $\mathcal{M}$-resolution
\begin{align*}
0\rightarrow t_{n-1}^A\rightarrow\cdots\rightarrow t_0^A \rightarrow A\rightarrow0,
\end{align*}
where $t_i^A\in \mathcal{M}$, for $i=0,\ldots,n-1$ and the following sequence is exact in $\mathrm{fp}(\mathcal{M})$
\begin{align*}
0\to\Hom_\Lambda(-,t_{n-1}^A)|_{\mathcal{M}}\to\cdots\to\Hom_\Lambda(-,t_0^A)|_{\mathcal{M}}\to \Hom_\Lambda(-,A)|_{\mathcal{M}}\to 0.
\end{align*}
Therefore we have
$$[\Hom_\Lambda(-,A)|_\mathcal{M}]=\displaystyle \sum_{i=0}^{n-1}(-1)^i[\Hom_\Lambda(-,t_i^A)|_\mathcal{M}]$$
in $\mathrm{K}_0(\mathrm{fp}(\mathcal{M}))$.
By \eqref{eq1} and Proposition \ref{f1}, we obtain
\begin{align*}
[\delta^\ast|_\mathcal{M}]&=\displaystyle \sum_{i=0}^{n-1}(-1)^i[\Hom_\Lambda(-,t_i^X)|_\mathcal{M}]-\displaystyle \sum_{i=0}^{n-1}(-1)^i[\Hom_\Lambda(-,t_i^Y)|_\mathcal{M}]+\displaystyle \sum_{i=0}^{n-1}(-1)^i[\Hom_\Lambda(-,t_i^Z)|_\mathcal{M}]\\
&=\sigma(\displaystyle \sum_{i=0}^{n-1}(-1)^i[t_i^X]_0)-\sigma(\displaystyle \sum_{i=0}^{n-1}(-1)^i[t_i^Y]_0)+\sigma(\displaystyle \sum_{i=0}^{n-1}(-1)^i[t_i^Z]_0)
\end{align*}
in $\mathrm{K}_0(\mathrm{fp}(\mathcal{M}))$. Consequently, we have the following identity in $\mathrm{K}_0(\mathrm{fp}(\mathcal{M}))$,
\begin{equation}\label{eq3}
[\delta^\ast|_\mathcal{M}]=\sigma(\mathrm{Ind}_\mathcal{M}(X)-\mathrm{Ind}_\mathcal{M}(Y)+\mathrm{Ind}_\mathcal{M}(Z)).
\end{equation}
Proposition \ref{f2} implies that $\delta^\ast|_{\mathcal{M}}\in\mathrm{eff}(\mathcal{M})$ and by assumption $\delta^\ast|_\mathcal{M}$ has finite length. Then we have
\begin{equation}\label{eq4}
[\delta^\ast|_\mathcal{M}]=[S_1]+\cdots+[S_t],
\end{equation}
 in $\mathrm{K}_0(\mathrm{fp}(\mathcal{M}))$ where $S_1,\ldots, S_t$ are simple composition factors of $\delta^\ast|_\mathcal{M}$. For each $i$, $S_i$ can be written as $\dfrac{\Hom_\Lambda(-,M_i)|_\mathcal{M}}{\mathcal{J}_\Lambda(-,M_i)|_\mathcal{M}}$, where $M_i\in\mathcal{M}$ is an indecomposable module and for any indecomposable module $X\in\mathcal{M}$ which is not isomorphic to $M_i$, $S_i(X)=0$.
Then for each $1\leq i\leq t$, $\delta^\ast|_\mathcal{M}(M_i)\neq0$. If $M_i$ is an indecomposable projective module for some $1\leq i\leq t$, then, by the Auslander's defect formula \cite [$\mathrm{IV}$, Theorem 4.1]{ARS}, we have $\delta^\ast|_\mathcal{M}(M_i)=\delta^\ast(M_i)=0$ which is a contradiction. Then for each $1\leq i\leq t$, $M_i$ is not projective and by Theorem \ref{n-as}, there exists an $n$-almost split sequence
\begin{align*}
0\rightarrow A_{n+1}^i\rightarrow\cdots\rightarrow A_1^i \rightarrow A_0^i\coloneqq M_i\rightarrow0,
\end{align*}
such that the sequence
\begin{align*}
0\to\Hom_\Lambda(-, A_{n+1}^i)|_{\mathcal{M}}\to\cdots\to\Hom_\Lambda(-, A_1^i)|_{\mathcal{M}}\to \Hom_\Lambda(-, M_i)|_{\mathcal{M}}\to S_i\to 0
\end{align*}
is exact in $\mathrm{fp}(\mathcal{M})$. Thus for each $1\leq i\leq t$, we have identities
\begin{equation}\label{eq5}
[S_i]=\displaystyle \sum_{j=0}^{n+1}(-1)^j[\Hom_\Lambda(-,A_j^i)|_{\mathcal{M}}]=\sigma(\displaystyle \sum_{j=0}^{n+1}(-1)^j[A_j^i]_0)
\end{equation}
in $\mathrm{K}_0(\mathrm{fp}(\mathcal{M}))$.
From \eqref{eq3}, \eqref{eq4} and \eqref{eq5} we obtain
\begin{align*}
\sigma(\mathrm{Ind}_\mathcal{M}(X)-\mathrm{Ind}_\mathcal{M}(Y)+\mathrm{Ind}_\mathcal{M}(Z))=\displaystyle \sum_{i=1}^{t}\sigma(\displaystyle \sum_{j=0}^{n+1}(-1)^j[A_j^i]_0).
\end{align*}
Since $\sigma$ is a group isomorphism,
\begin{align*}
\mathrm{Ind}_\mathcal{M}(X)-\mathrm{Ind}_\mathcal{M}(Y)+\mathrm{Ind}_\mathcal{M}(Z)=\displaystyle \sum_{i=1}^{t}(\displaystyle \sum_{j=0}^{n+1}(-1)^j[A_j^i]_0).
\end{align*}
 By applying $\pi$, we obtain
 $\pi(\mathrm{Ind}_\mathcal{M}(X)-\mathrm{Ind}_\mathcal{M}(Y)+\mathrm{Ind}_\mathcal{M}(Z))=0$ as required.
It is easy to see that $g^\prime$ and $g$ are mutually inverse and the assertion follows.
\end{proof}

We recall that $\mathrm{proj}(\modd\Lambda)\subseteq\mathcal{M}$ and we denote by $\mathscr{P}(\Lambda)$ the full subcategory of $\ind(\mathcal{M})$ consisting of all indecomposable projective $\Lambda$-modules.

Now, we are ready to prove Theorem \ref{main3}.
\begin{proof}[Proof of Theorem \ref{main3}]
$(1)\Leftrightarrow(2):$ Follows from Theorem \ref{main1} and part (2) of Lemma \ref{au1}.\\
$(3)\Rightarrow(2):$ Follows from Lemma \ref{kra3.4}.\\
$(2)\Rightarrow(3):$ By \cite[$\mathrm{I}$, Theorem 1.7]{ARS}, $\{[A]-[\rad(A)] \,|\, A\in\mathscr{P}(\Lambda)\}$ is a free basis for $\mathrm{K}_0(\modd\Lambda)$. By using the group isomorphism $g^\prime$ in the proof of Theorem \ref{eqofgro} we obtain a free basis $\{[A]_0-\mathrm{Ind}_\mathcal{M}(\rad(A))+\mathrm{Ker}(\pi)\,|\,A\in\mathscr{P}(\Lambda)\}$ for $\mathrm{K}_0(\mathcal{M})$. Then $\{\pi(\beta_A)\,|\,A\in\mathscr{P}(\Lambda)\}$ is a free basis for $\mathrm{K}_0(\mathcal{M})$. By Theorem \ref{main1}, $\mathrm{Ker}(\pi)$ is generated by $\{\beta_A\,|\,A\in\ind(\mathcal{M})\backslash\mathscr{P}(\Lambda)\}$. Therefore $\{\beta_A\,|\,A\in\ind(\mathcal{M})\}$ generate $\mathrm{K}_0(\mathcal{M},0)$ and the result follows from Lemma \ref{au2}.
\end{proof}

As a consequence of our main result, when $(\Lambda,\mathcal{M})$ is an $n$-homological pair of finite type, we give the following description of the relations of $\mathrm{K}_0(\modd\Lambda)$.

\begin{corollary}\label{cor}
Let $\Lambda$ be an artin algebra. If $\modd\Lambda$ has an $n$-cluster tilting subcategory $\mathcal{M}$ of finite type then the elements $\displaystyle\sum_{i=0}^{n+1}(-1)^i[A_i]_0$, for all $n$-almost split sequences $0\to A_{n+1}\to\cdots\to A_0\to0$ in $\mathcal{M}$ form a basis for the kernel of $\pi_\Lambda:\mathrm{K}_0(\modd\Lambda,0)\rightarrow \mathrm{K}_0(\modd\Lambda)$.
\end{corollary}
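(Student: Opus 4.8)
The plan is to deduce this from Theorem \ref{main3} and Theorem \ref{eqofgro}. Assume $\modd\Lambda$ has an $n$-cluster tilting subcategory $\mathcal{M}$ of finite type, so $(\Lambda,\mathcal{M})$ is an $n$-homological pair of finite type, and let $\pi\colon\mathrm{K}_0(\mathcal{M},0)\to\mathrm{K}_0(\mathcal{M})$ be the natural projection. Since $\ind(\mathcal{M})$ is finite, every finitely presented functor $\mathcal{M}\to\mathrm{Ab}$ has finite support, hence finite length by Lemma \ref{kra3.4}; in particular every effaceable functor $\mathcal{M}\to\mathrm{Ab}$ has finite length, so Theorem \ref{eqofgro} applies and yields the mutually inverse group isomorphisms $g\colon\mathrm{K}_0(\mathcal{M})\to\mathrm{K}_0(\modd\Lambda)$, $[X]\mapsto[X]$, and $g'\colon\mathrm{K}_0(\modd\Lambda)\to\mathrm{K}_0(\mathcal{M})$, $[X]\mapsto\mathrm{Ind}_\mathcal{M}(X)+\mathrm{Ker}(\pi)$, constructed in its proof. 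Moreover, by Theorem \ref{main3} (applied with $(\Lambda,\mathcal{M})$ of finite type), the elements $\sum_{i=0}^{n+1}(-1)^i[A_i]_0$, over all $n$-almost split sequences $0\to A_{n+1}\to\cdots\to A_0\to0$ in $\mathcal{M}$, form a basis of $\mathrm{Ker}(\pi)$.

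Next I would compare the two split Grothendieck groups. The rule $[X]_0\mapsto[X]_0$ defines a group homomorphism $i\colon\mathrm{K}_0(\mathcal{M},0)\to\mathrm{K}_0(\modd\Lambda,0)$ which, since it sends the canonical basis $\{[X]_0\mid X\in\ind(\mathcal{M})\}$ injectively into the canonical basis $\{[Y]_0\mid Y\in\ind(\modd\Lambda)\}$, is a split monomorphism; a retraction is the additive extension $\mathrm{Ind}\colon\mathrm{K}_0(\modd\Lambda,0)\to\mathrm{K}_0(\mathcal{M},0)$ of $\mathrm{Ind}_\mathcal{M}$, which is indeed a retraction because $\mathrm{Ind}_\mathcal{M}(X)=[X]_0$ for $X\in\mathcal{M}$. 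The square with rows $i$, $g$ and columns $\pi$, $\pi_\Lambda$ commutes: both composites send $[X]_0$ to $[X]\in\mathrm{K}_0(\modd\Lambda)$, where for $g\circ\pi$ one uses that a right $\mathcal{M}$-resolution $0\to t_{n-1}^X\to\cdots\to t_0^X\to X\to0$ is exact in $\modd\Lambda$, so that $[X]=\sum_k(-1)^k[t_k^X]$. As $g$ is an isomorphism, a diagram chase gives $i(\mathrm{Ker}(\pi))=\mathrm{Ker}(\pi_\Lambda)\cap i(\mathrm{K}_0(\mathcal{M},0))$; thus $i$ carries the basis of $\mathrm{Ker}(\pi)$ bijectively onto a linearly independent subset of $\mathrm{Ker}(\pi_\Lambda)$ whose members are exactly the elements in the statement. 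Equivalently, composing $\pi$ with $g$ already exhibits these elements as a basis of the kernel of the surjection $\mathrm{K}_0(\mathcal{M},0)\to\mathrm{K}_0(\modd\Lambda)$, $[X]_0\mapsto[X]$.

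It then remains to verify that these elements span all of $\mathrm{Ker}(\pi_\Lambda)$, i.e.\ that $\mathrm{Ker}(\pi_\Lambda)=i(\mathrm{Ker}(\pi))$, and this is the step I expect to be the real obstacle. The natural attempt is: for $z\in\mathrm{Ker}(\pi_\Lambda)$ write $z=i(\mathrm{Ind}(z))+\bigl(z-i(\mathrm{Ind}(z))\bigr)$; one checks using $g'$ that $\mathrm{Ind}(z)\in\mathrm{Ker}(\pi)$, so the first summand lies in $i(\mathrm{Ker}(\pi))$, whereas the second lies in $\mathrm{Ker}(\mathrm{Ind})\cap\mathrm{Ker}(\pi_\Lambda)$. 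Everything therefore comes down to proving $\mathrm{Ker}(\mathrm{Ind})\cap\mathrm{Ker}(\pi_\Lambda)=0$ — equivalently that $(\mathrm{Ind},\pi_\Lambda)\colon\mathrm{K}_0(\modd\Lambda,0)\to\mathrm{K}_0(\mathcal{M},0)\oplus\mathrm{K}_0(\modd\Lambda)$ is injective — which is precisely where the finiteness of $(\Lambda,\mathcal{M})$ has to be invoked in an essential way. Once that faithfulness is in hand, $i$ restricts to an isomorphism $\mathrm{Ker}(\pi)\xrightarrow{\sim}\mathrm{Ker}(\pi_\Lambda)$, and transporting the basis provided by Theorem \ref{main3} finishes the proof.
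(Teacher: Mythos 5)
Your argument stalls exactly where you say it does, and unfortunately the missing step cannot be supplied: the claim $\mathrm{Ker}(\mathrm{Ind})\cap\mathrm{Ker}(\pi_\Lambda)=0$ is false whenever $\ind(\mathcal{M})\subsetneq\ind(\modd\Lambda)$, which for $n\geq 2$ happens unless $\Lambda$ is semisimple. Take any $W\in\ind(\modd\Lambda)\setminus\ind(\mathcal{M})$ and a right $\mathcal{M}$-resolution $0\to t_{n-1}\to\cdots\to t_0\to W\to 0$. The element $z=[W]_0-\sum_{k=0}^{n-1}(-1)^k[t_k]_0$ lies in $\mathrm{Ker}(\pi_\Lambda)$ because the resolution is exact in $\modd\Lambda$, lies in $\mathrm{Ker}(\mathrm{Ind})$ by the very definition of $\mathrm{Ind}_\mathcal{M}$, and is nonzero because its coordinate at the basis element $[W]_0$ of $\mathrm{K}_0(\modd\Lambda,0)$ equals $1$. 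Since each element $\sum_{i}(-1)^i[A_i]_0$ attached to an $n$-almost split sequence is supported on $\ind(\mathcal{M})$, no integer combination of them can equal $z$; a rank count says the same thing: $\mathrm{Ker}(\pi_\Lambda)$ is free of rank $|\ind(\modd\Lambda)|-s$ (with $s$ the number of simples), while there are only $|\ind(\mathcal{M})|-s$ many $n$-almost split sequences.

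What you do prove --- the commutative square, injectivity of $i$, and $i(\mathrm{Ker}(\pi))=\mathrm{Ker}(\pi_\Lambda)\cap\mathrm{Im}(i)$ --- yields exactly your parenthetical reformulation: the elements form a basis of the kernel of the surjection $\mathrm{K}_0(\mathcal{M},0)\to\mathrm{K}_0(\modd\Lambda)$, $[X]_0\mapsto[X]$, and that is as far as one can go. For comparison, the paper's own proof is the single assertion that $\mathrm{Ker}(\pi_\Lambda)=\mathrm{Ker}(\pi)$ follows from the proof of Theorem \ref{eqofgro}; but what that proof actually gives is $g'\circ\pi_\Lambda=\pi\circ\mathrm{Ind}$, hence $\mathrm{Ker}(\pi_\Lambda)=\mathrm{Ind}^{-1}(\mathrm{Ker}(\pi))$, which contains all of $\mathrm{Ker}(\mathrm{Ind})$ and is strictly larger than $i(\mathrm{Ker}(\pi))$ once $\mathcal{M}\neq\modd\Lambda$. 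So your diagnosis of the ``real obstacle'' is correct, and the obstacle lies in the statement rather than in your technique: either $\pi_\Lambda$ must be replaced by the surjection out of $\mathrm{K}_0(\mathcal{M},0)$, or the proposed basis must be enlarged, e.g.\ by the elements $[X]_0-\sum_{k}(-1)^k[t_k^X]_0$ for $X\in\ind(\modd\Lambda)\setminus\ind(\mathcal{M})$.
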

\begin{proof}
By using the proof of Theorem \ref{eqofgro}, we can see that $\mathrm{Ker}(\pi_\Lambda)=\mathrm{Ker}(\pi)$. Then the result follows from Theorem \ref{main3}.
\end{proof}

\section*{acknowledgements}
The research of the second author was in part supported by a grant from IPM (No. 1400170417).

\end{document}